\newtheorem{theorem}{Theorem}[section]
\newtheorem{lemma}[theorem]{Lemma}
\theoremstyle{remark}
\theoremstyle{definition}
\newtheorem{definition}[theorem]{Definition}
\newtheorem{remark}[theorem]{Remark}
\newtheorem{problem}[theorem]{Problem}
\numberwithin{equation}{section}
\begin{document}
  \title{Arcsine law for random dynamics with a core}

\author{Fumihiko Nakamura}
\address[Fumihiko Nakamura]{Faculty of Engineering, Kitami Institute of Technology, Hokkaido, 090-8507, JAPAN}
\email{nfumihiko@mail.kitami-it.ac.jp}

\author{Yushi Nakano}
\address[Yushi Nakano]{Department of Mathematics, Tokai University,  Kanagawa, 259-1292, JAPAN}
\email{yushi.nakano@tsc.u-tokai.ac.jp}

\author{Hisayoshi Toyokawa}
\address[Hisayoshi Toyokawa]{Faculty of Engineering, Kitami Institute of Technology, Hokkaido, 090-8507, JAPAN}
\email{h\_toyokawa@mail.kitami-it.ac.jp}

\author{Kouji Yano}
\address[Kouji Yano]{Graduate School of Science, Kyoto University, Kyoto 606-8502, JAPAN}
\email{kyanomath@gmail.com}

\makeatletter
\@namedef{subjclassname@2020}{\textup{2020} Mathematics Subject Classification}
\makeatother

\subjclass[2020]{Primary 37A50; Secondary 37H12, 60F05}

\keywords{Arcsine law; random dynamical systems;  infinite ergodic theory; core dynamics}

    \date{\today}

\begin{abstract}
In their recent paper \cite{HY2021}, G.~Hata and the fourth author first gave an example of random iterations of two piecewise linear interval maps without (deterministic)   indifferent periodic points for which the arcsine law --  a characterization of intermittent dynamics in infinite ergodic theory -- holds. The key in the proof of the result is the existence of a Markov partition preserved by each interval maps. In the present paper, we give a class of random iterations of two    interval maps without indifferent periodic points but satisfying the arcsine law, by introducing a concept of core random dynamics. As applications, we show that the generalized arcsine law holds for generalized Hata--Yano maps and piecewise linear versions of Gharaei--Homburg maps, both of which do not have a Markov partition in general. 
\end{abstract}

  \maketitle
  
\section{Introduction}\label{s:intro}
This paper concerns   the generalized arcsine law of random iterations of interval maps with intermittent behavior.
Intermittency is the irregular alternation of phases of apparently laminar and chaotic dynamics,  and commonly observed in fluid flows  near the transition to turbulence. 
The well-known model of intermittent dynamics is  the Pomeau--Manneville  map, that is,  a piecewise  expanding map $f$  of the interval $[0, 1]$ with two increasing surjective branches and an \emph{indifferent} fixed point at the boundary point $0$, such as 
\begin{equation}\label{eq:1017a}
f(x) =x + x^{p+1} \mod 1 \quad (p>0), 
\end{equation}
named after Pomeau and Manneville since they numerically studied  such interval maps in  \cite{PM1980}.
The existence of  the indifferent fixed point $0$ makes a typical orbit of $f$ have  long stays  around $0$ (the laminar phases) while the expanding property of $f$ makes the orbit have bursts outside  the neighborhood of $0$ (the chaotic phases). 
Furthermore, the   Pomeau--Manneville map  with the indifferent fixed point $0$ of order $p+1$, such as \eqref{eq:1017a},  is known to possess an absolutely continuous invariant measure $\mu$ whose density is of order $x^{-p}$ near $0$, so  $p\geq 1$ if and only if $\mu$ has \emph{infinite} mass near $0$, meaning that  the orbit  stays most of time near the indifferent fixed point $0$.

Thaler showed in \cite{Thaler1983} that when $f$ is a piecewise expanding  interval map with two increasing surjective branches and two  indifferent fixed points at the boundary points $0$ and $1$ of the same order $p+1$ for $p\geq 1$,  such as
\[
f(x) =\begin{cases}
x + 2^p x^{p+1}  \quad &(x\in [0,\frac{1}{2}))\\
x - 2^p (1-x)^{p+1}  \quad &(x\in [\frac{1}{2},1])
\end{cases}
\]
then $f$ has a unique  absolutely continuous invariant measure $\mu$ with infinite mass near $0$ and $1$, and also showed in \cite{Thaler2002} that the arcsine law holds in the sense that
\[
\lim _{N\to \infty}\mu \left( \frac{S_N^+}{N} \leq a \right)  = \frac{2}{\pi}\arcsin \sqrt a\quad \text{with} \quad S_N^+(x) = \sum _{n=0}^{N-1}1_{[\frac{1}{2},1]}\circ  f^n(x)
\]
for each $a\in [0,1]$ under a   condition on  wandering rates (see Theorem \ref{thm:t} for precise description for the condition). 
This was recently generalized in \cite{SY2019} to multi-ray settings   by T.~Sera and the fourth author, by using ideas from excursion theory. 
See also   \cite{AS2019, AS2021, Sera2020, TZ2006} and reference therein.

Although the existence of indifferent fixed points of maps  in the above works is indispensable for the generalized arcsine law of  intermittent dynamics, G.~Hata and the fourth author recently  showed in \cite{HY2021}  that random iterations of two piecewise linear  interval maps  without   indifferent periodic points,
  \begin{equation}\label{eq:HY}
f_0(x) =
\begin{cases}
\frac{x}{2}    \quad &(x\in [0, \frac{1}{2}))\\
2x - 1  \quad &(x\in [\frac{1}{2},1])
\end{cases},
\quad
f_1(x) =
\begin{cases}
2x  \quad &(x\in [0,\frac{1}{2}))\\
\frac{x+1}{2}    \quad &(x\in [\frac{1}{2},1])
\end{cases},
\tag{HY}
  \end{equation}
 where   each map is chosen with probability $\frac{1}{2}$ at each step,  exhibit   the   arcsine law. 
The endpoints $0$ and $1$ are common fixed points of $f_0$ and $f_1$, and   they are in fact \emph{indifferent in average} in the sense that
\begin{equation}\label{eq:1102a}
 \frac{\log\vert f_0'(0) \vert + \log\vert   f_1'(0) \vert}{2} = \frac{\log\vert f_0'(1) \vert + \log\vert f_1'(1)  \vert}{2} =0.
\end{equation}
Such an indifferent in average fixed point was also called zero Lyapunov exponent at the boundary by Gharaei and Homburg \cite{GH2017}: they showed the ``on-off intermittency'' (which may be more easily observed than the generalized arcsine law; 
see Remark \ref{cor:1b} for details)
 for  random iterations of  two  maps $f_0$ and $f_1$ on the interval $[0,1]$ such that 
  \begin{itemize}
  \item[(GH1)]
both $f_0$ and $f_1$ are  diffeomorphisms,
\item[(GH2)] $f_0(x) <x$ and $f_1(x)>x$ for all $x\in [0,1]$,
\item[(GH3)] $f_0(0)=f_1(0) =0$, $f_0(1)=f_1(1)=1$ and \eqref{eq:1102a} holds.
\end{itemize}
See Figure \ref{fig1}. 
This was generalized to chaotically driven non-autonomous iterations of such a pair of two maps in \cite{HR2020}. 
See also  \cite{AGH2018, HKRVZ2021, HP2019} for recent works for critical intermittency of iterated functions systems.

\begin{figure}[hbt]
\centering
\includegraphics[bb=0 0 650 260, width=12cm]{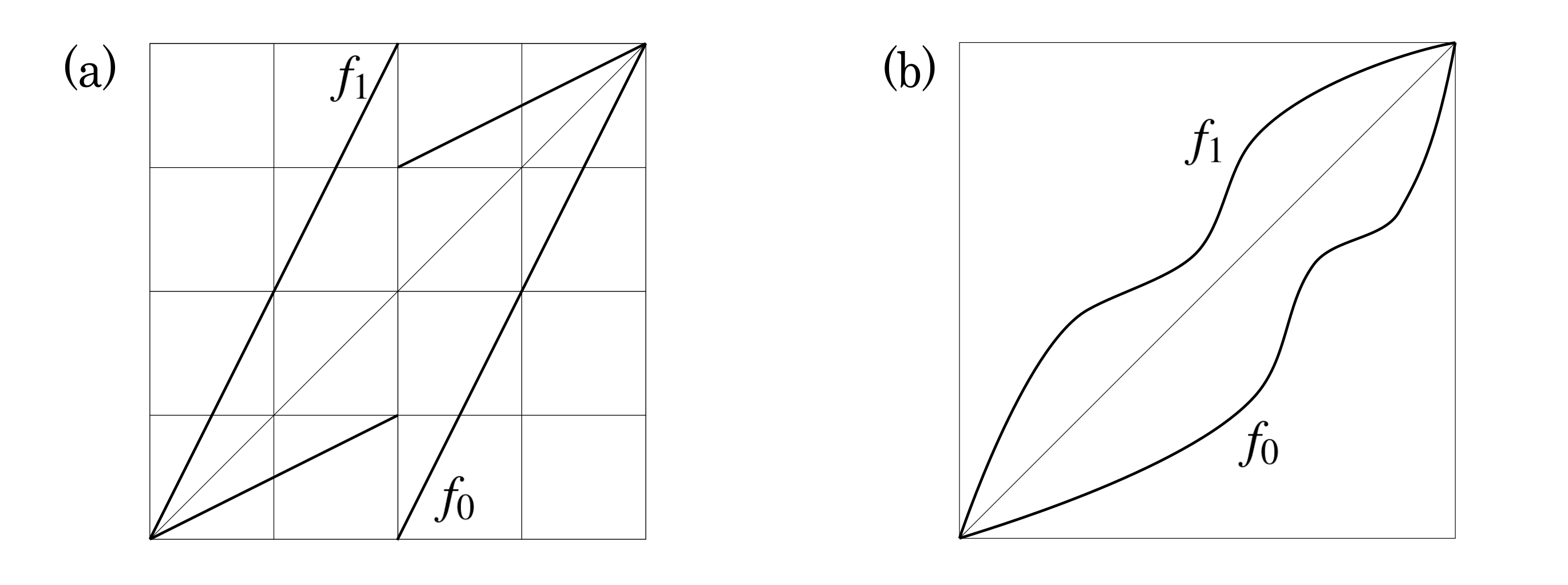}
\caption{(a) Hata--Yano  map; (b) Gharaei--Homburg  map }
\label{fig1}
\end{figure}

The key in the proof of the arcsine law in \cite{HY2021} is the existence of a Markov partition  preserved by each interval maps.
In the present paper, we give a  class of  random iterations of two    interval maps  without   indifferent periodic points but  satisfying the   arcsine law, by introducing a concept of core dynamics. 
As applications, we show that the  arcsine law holds for some generalized Hata--Yano (HY) maps and some piecewise linear versions 
 of   Gharaei--Homburg (GH) maps, both of which do not have a Markov partition in general (see Figure \ref{fig1b} as examples 
  and refer to Section \ref{ss:example} for precise definitions). 
  
\begin{figure}[hbt]
\centering
\includegraphics[bb=0 0 650 260, width=12cm]{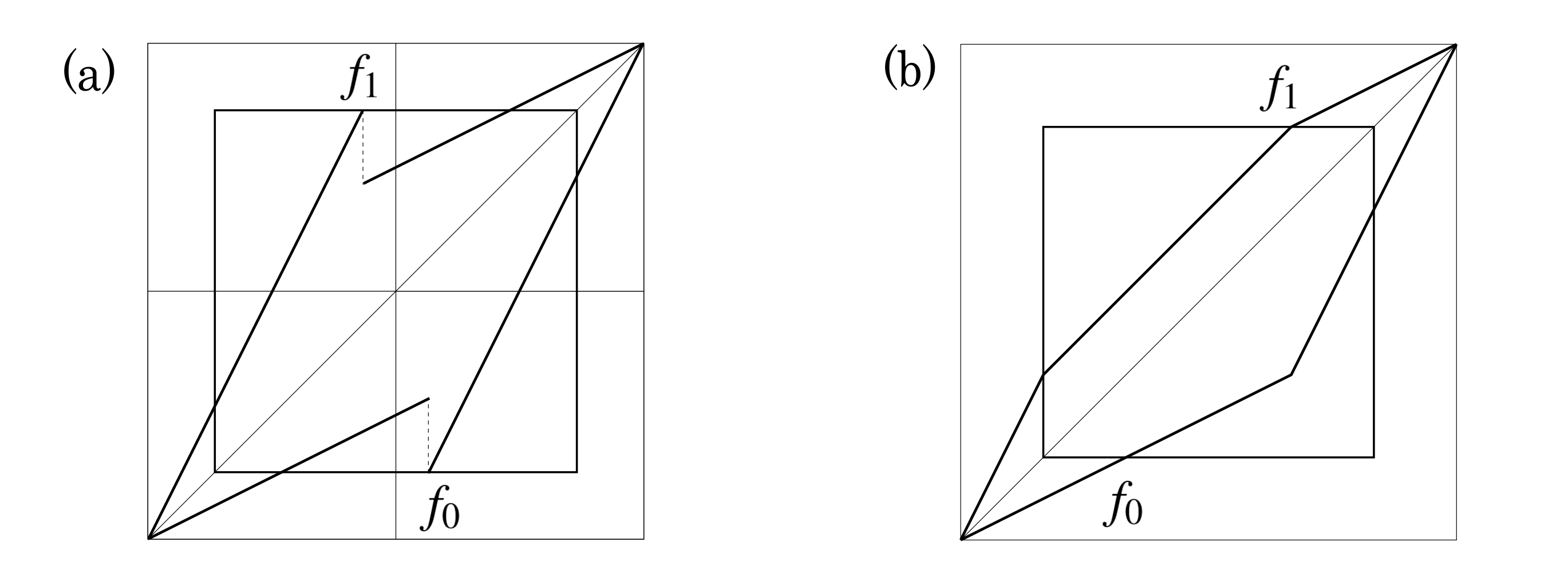}
\caption{(a) Generalized HY  map; (b) Piecewise linear GH  map }
\label{fig1b}
\end{figure}

\subsection{Main result: core  random dynamics}

We consider two interval maps $f_0$, $f_1:[0,1]\to[0,1]$ such that there are a real number $c\in (0,\frac{1}{2}]$ and measurable maps $g_0:[c,1-\frac{c}{2}]\to [\frac{c}{2},1-c]$,  $g_1:[\frac{c}{2},1-c]\to[c,1-c]$ satisfying that 
\begin{eqnarray}\label{eq:1103}
\qquad
f_0(x)=\begin{cases}
\frac{x}{2} & x\in[0,c)\\
g_0(x) & x\in[c,1-\frac{c}{2})\\
2x-1 & x\in[1-\frac{c}{2},1]
\end{cases},
\quad
f_1(x)=\begin{cases}
2x & x\in[0,\frac{c}{2})\\
g_1(x) & x\in[\frac{c}{2},1-c)\\
\frac{x+1}{2} & x\in[1-c,1]
\end{cases}.
\end{eqnarray}
See Figure \ref{fig2}.
Note that given $(f_0, f_1)$ satisfying \eqref{eq:1103} with some $(c, g_0, g_1)$, one may find another pair $(\tilde c, \tilde g_0, \tilde g_1)$  for which  \eqref{eq:1103} holds instead of $(c, g_0, g_1)$.
Notice that, if we take $g_0(x) =\frac{x}{2}$ on $[c,\frac{1}{2}+\frac{c}{4})$, $2x-1$ on $[\frac{1}{2}+\frac{c}{4}, 1-\frac{c}{2}]$ and $g_1(x) =2x$ on $[\frac{c}{2},\frac{1}{2}-\frac{c}{4})$, $\frac{x+1}{2}$ on $[\frac{1}{2}-\frac{c}{4}, 1-c]$, then $(f_0, f_1)$ converges to the one given in (HY) in the limit $c\to 0$. Notice also that 
$(f_0, f_1)$ satisfies (GH1)-(GH3) when $g_0$, $g_1$ are diffeomorphisms,   $f_0$ is of  class $\mathcal C^1$ near $x=c$ and $x=1-\frac{c}{2}$ and  $f_1$ is of class $\mathcal C^1$ near $x=\frac{c}{2}$ and $x=1-c$. 
As one can see below, the slopes $2$ and $\frac{1}{2}$ are not essential in our main result: they can be replaced by $\lambda$ and $\lambda ^{-1}$ for any $\lambda >1$.
We emphasize that, in contrast,  Hata--Yano \cite{HY2021} essentially required that the slopes be $2$ and $\frac{1}{2}$ in order to ensure the existence of a Markov partition.

Let $T$ be a random map of $[0,1]$ such that $T=f_0$ and $f_1$ with equal probabilities.

\begin{figure}[hbt]
\centering
\includegraphics[bb=0 0 650 260, width=12cm]{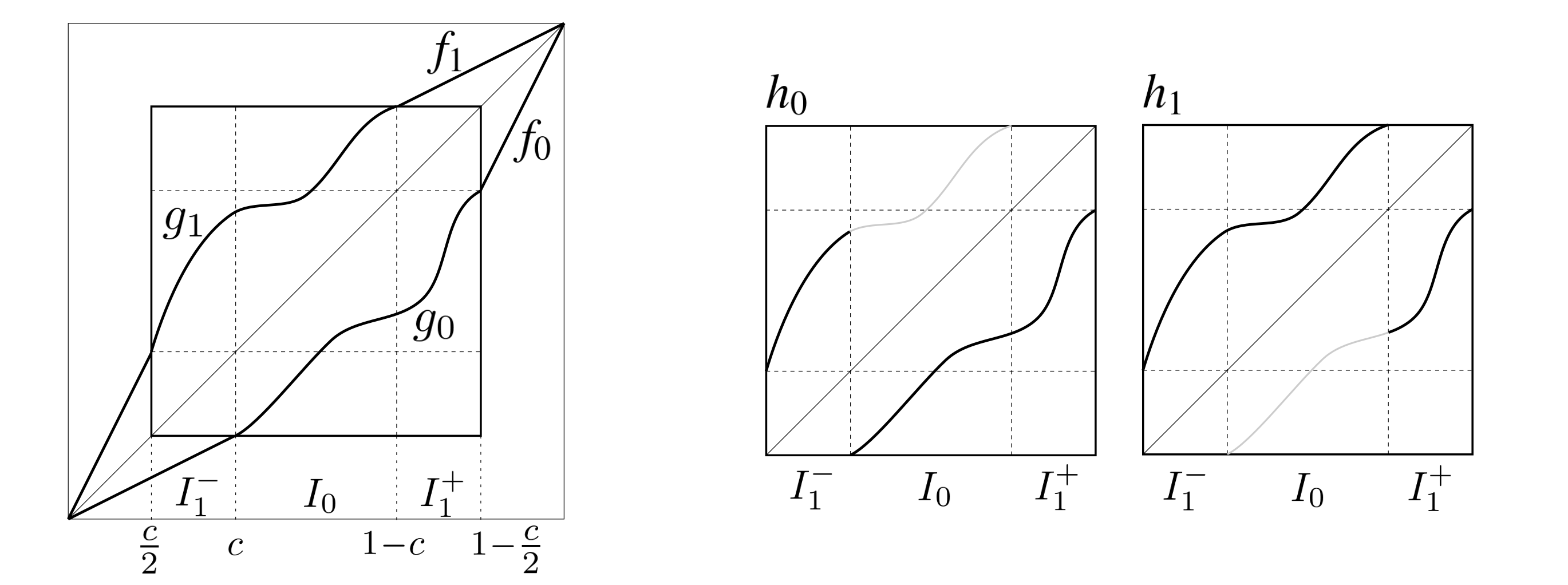}
\caption{Random dynamics   with a core random dynamics }
\label{fig2}
\end{figure}

For notational simplicity, we denote $[\frac{c}{2}, c)$, $[c, 1-c)$, $[1-c, 1- \frac{c}{2})$ by $I_1^-$, $I_0$, $I_1^+$, respectively, and $I_1^- \cup I_0 \cup I_1^+$ by $Y$.
Define two maps $h_0$, $h _1: Y\to Y$ given by
\begin{eqnarray}\label{eq:1103b}
\qquad
h _0(x)=\begin{cases}
g_1(x) & x\in I_1^-\\
g_0(x) & x\in I_0 \cup I_1^+
\end{cases},
\quad
h _1(x)=\begin{cases}
g_1(x) & x\in I_1^- \cup I_0\\
g_0(x) & x\in I_1^+
\end{cases},
\end{eqnarray}
and let $T_{\mathrm{core}}$ be a random map of $Y$ such that $T_{\mathrm{core}}= h _0$ and $h_1$ with equal probabilities.

\begin{definition}\label{def:1}
Let $\{T_n\}_{n=1}^\infty$ be an i.i.d.~sequence of random maps whose distribution is the same as that of $T$, 
 and let
$$
T^{(n)}=T_n\circ T_{n-1}\circ\cdots\circ T_1
$$
for each $n\geq 1$.
Similarly, we define $T^{(n)}_{\mathrm{core}}$ for $n\geq 1$ from $T_{\mathrm{core}}$. 
Then, we call $\{ T^{(n)}\} _{n=1}^\infty$  a \emph{random dynamics   with a core random dynamics $\{ T^{(n)}_{\mathrm{core}}\} _{n=1}^\infty$ }.
\end{definition}

Recall that,  given a random map $S$ on an interval $I$ with two measurable maps $h_0$, $h_1$ on $I$, that is, $S=h_0$ and $h_1$ with equal probabilities,  a measure $\nu$ on $I$ is called \emph{$S$-invariant}   if $\nu$ is not a zero measure and 
\[
\frac{(h_0)_* \nu +(h _1)_*\nu}{2}=\nu ,
\]
where $(h_j)_* \nu = \nu \circ h _j^{-1}$ for $j=0, 1$.
Moreover, an $S$-invariant measure $\nu$ is called \emph{annealed metrically transitive}
 if for any two Borel sets $A, B \subset I$ with $\nu (A) \nu (B) > 0$, there exists an integer $n\geq 0$   such that $\nu (S ^{ (-n)} A \cap  B) $ has a positive expectation, where $S^{(-n)} A$ is the inverse image of $A$ by the random composition $S^{(n)}$  of an i.i.d.~sequence of $n$ random maps whose distribution is same as the one of $S$.

We can now state   our main result.
\begin{theorem}\label{thm:main}
Let $\{T^{(n)}\}_{n=1}^\infty$ be a random dynamics   with a core random dynamics $\{ T_{\mathrm{core}}^{(n)} \}_{n=1}^\infty$ given in Definition \ref{def:1}. 
Suppose that there exists an annealed metrically transitive $T_{\mathrm{core}}$-invariant probability measure $\nu$ with $\nu (I_1^-) \nu (I_1^+) > 0$.
Then, for any random variable $\Theta$ with values in $Y$  which is independent of the random maps $\{T_n\} _{n=1}^\infty$ and whose distribution is absolutely continuous with respect to  $\nu$, 
  it holds that
\begin{eqnarray}\label{eq:1103c}
\frac{1}{N}\sum_{n=0}^{N-1}1_{\{T^{(n)}(\Theta)\geq \frac{1}{2}\}}
\xrightarrow[N\to\infty]{\mathrm{d}}
\frac{1}{\pi\sqrt{x(1-x)}} \cdot\frac{b}{b^{2} x+(1-x)}d x 
\end{eqnarray}
with
\[
 b=\frac{1-\beta}{\beta}, \quad \beta=\frac{\nu(I_1^-)}{\nu(I_1^-)+\nu(I_1^+)}, 
\]
where we mean by $\xrightarrow[N\to\infty]{\mathrm{d}}$ the convergence in distribution.
\end{theorem}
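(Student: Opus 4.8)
The plan is to realise the full chain $\{T^{(n)}(\Theta)\}$ as the core chain $\{T^{(n)}_{\mathrm{core}}(\Theta)\}$ with independent null-recurrent random-walk excursions inserted, and then to deduce \eqref{eq:1103c} from a Lamperti-type occupation-time theorem for two-sided heavy-tailed renewal schemes, in the spirit of \cite{Thaler2002,SY2019,TZ2006}; here the core dynamics plays the structural role that the Markov partition played in \cite{HY2021}. The decomposition rests on the local structure of $(f_0,f_1)$: on $I_0=[c,1-c)$ one has $f_0=g_0=h_0$ and $f_1=g_1=h_1$, so $T$ and $T_{\mathrm{core}}$ agree there; on $I_1^-=[\tfrac c2,c)$ both $h_0,h_1$ equal $g_1$, while $T$ applies $f_1=g_1$ with probability $\tfrac12$ and otherwise $f_0(x)=\tfrac x2$, pushing the orbit into $[0,\tfrac c2)$; symmetrically on $I_1^+=[1-c,1-\tfrac c2)$ both $h_0,h_1$ equal $g_0$, while $T$ applies $f_0=g_0$ with probability $\tfrac12$ and otherwise $f_1(x)=\tfrac{x+1}2$, pushing the orbit into $[1-\tfrac c2,1]$. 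Using the driving i.i.d.\ maps I would then define stopping times $0=\sigma_0<\sigma_1<\cdots$ at which the orbit of $\Theta$ ``completes one core step'', coupled so that $T^{(\sigma_k)}(\Theta)=X_k:=T^{(k)}_{\mathrm{core}}(\Theta)$ for all $k$; then $\sigma_{k+1}-\sigma_k=1$ when $X_k\in I_0$, and $\sigma_{k+1}-\sigma_k=1+D_k$ when $X_k\in I_1^-\cup I_1^+$, where $D_k$ is the total length of the excursions made near $0$ (if $X_k\in I_1^-$) or near $1$ (if $X_k\in I_1^+$) during that visit.

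Next I would analyse those excursions. On $[0,\tfrac c2)$ the two branches are $x\mapsto\tfrac x2$ and $x\mapsto 2x$, so in the coordinate $y=\log_2(c/x)$ the orbit performs the simple symmetric random walk on $\mathbb Z$ with unit steps; since the step is $\pm1$ the fractional part of $y$ is preserved, so each excursion re-enters $I_1^-$ \emph{exactly at the point $X_k$ it left}, after $1+\tau$ steps, with $\tau$ the first-passage-by-one time of the walk, and the same holds near $1$ in $z=\log_2(c/(1-x))$. Hence a visit to $I_1^-$ is a $\mathrm{Geom}(\tfrac12)$ number of such excursions followed by the single step $X_k\mapsto g_1(X_k)$, and likewise at $I_1^+$; in particular $D_k$ is regularly varying of index $-\tfrac12$ with the \emph{same} tail on both sides, because the slope merely rescales the log-coordinate and cancels, which is why $2,\tfrac12$ may be replaced by any $\lambda,\lambda^{-1}$. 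Throughout a visit to $I_1^-$ the orbit stays in $[0,c)\subset[0,\tfrac12)$, so the indicator $1_{\{\,\cdot\,\ge\tfrac12\}}$ vanishes there; throughout a visit to $I_1^+$ it stays in $[1-c,1]\subset[\tfrac12,1]$, so the indicator is $1$; a visit to $I_0$ is one step with indicator $1_{\{X_k\ge\tfrac12\}}$.

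Then I would combine ergodicity with a stable limit theorem. Writing $N=\sigma_n$ and setting $D_k:=0$ when $X_k\in I_0$, one gets $N=n+\sum_{k<n}D_k$ and $\sum_{j=0}^{N-1}1_{\{T^{(j)}(\Theta)\ge\tfrac12\}}=\sum_{k<n}D_k\,1_{\{X_k\in I_1^+\}}+O(n)$. Annealed metric transitivity of $\nu$, through the ergodic theorem for $\{X_k\}$, gives $\tfrac1n\#\{k<n:X_k\in I_1^-\}\to\nu(I_1^-)$ and the analogue for $I_1^+$, so the numbers of completed excursions near $0$ and near $1$ up to time $n$ are $\sim n\,\nu(I_1^-)$ and $\sim n\,\nu(I_1^+)$, while conditionally on $\{X_k\}$ the two families of excursion lengths are i.i.d.\ and mutually independent. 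Applying the stable limit theorem of index $\tfrac12$ separately on each side then yields, jointly, $n^{-2}\sum_{k<n}D_k1_{\{X_k\in I_1^-\}}\Rightarrow\kappa\,\nu(I_1^-)^2V$ and $n^{-2}\sum_{k<n}D_k1_{\{X_k\in I_1^+\}}\Rightarrow\kappa\,\nu(I_1^+)^2U$ with $U,V$ i.i.d.\ one-sided $\tfrac12$-stable and a single $\kappa$, the $O(n)$ terms being negligible after dividing by $N\sim n^2$. By the continuous mapping theorem, $N^{-1}\sum_{j<N}1_{\{T^{(j)}(\Theta)\ge\tfrac12\}}$ converges in distribution to $\nu(I_1^+)^2U/(\nu(I_1^+)^2U+\nu(I_1^-)^2V)$; since the ratio of two i.i.d.\ one-sided $\tfrac12$-stable variables has the law of the square of a standard Cauchy variable, with density $1/(\pi\sqrt w\,(1+w))$ on $(0,\infty)$, a change of variables turns this into the density displayed in \eqref{eq:1103c}, the identification of the constant with $b=(1-\beta)/\beta$ being the routine last step. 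It then remains to pass from the random index $n$ to deterministic $N$ — controlling the single incomplete current visit via the joint functional limit of $\big(\sigma_{\lfloor ns\rfloor},\ \sum_{k<\lfloor ns\rfloor}D_k1_{\{X_k\in I_1^+\}}\big)$ to a $\tfrac12$-stable subordinator pair — and from $\Theta\sim\nu$ to $\Theta$ with law merely $\ll\nu$, by a standard absolute-continuity argument.

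\emph{The main obstacle} is the renewal decomposition together with the conditional-independence claims above: one must construct the stopping times $\sigma_k$ and the coupling so that, given the core chain, the excursion lengths $D_k$ are genuinely independent with the stated null-recurrent-walk law — this is exactly where the exact-return property is used — and one must run the ergodic theorem for $\{X_k\}$ from an initial law that is only absolutely continuous with respect to $\nu$. Once this renewal picture and these ergodic averages are secured, the remaining steps are the classical Lamperti-type arcsine machinery already available in the cited works.
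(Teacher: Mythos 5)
Your proposal is correct in outline and follows a genuinely different route from the paper's. The paper constructs a $\sigma$-finite $T$-invariant measure $\mu$ by spreading $\nu$ over the tails $I_n^\pm$ (Section~3), lifts the random dynamics to the deterministic skew-product $\widetilde T$ on $\Omega\times[0,1]$ with invariant measure $\widetilde\mu=\mathbb P\otimes\mu$, establishes conservativity and ergodicity via annealed metric transitivity (Lemmas~\ref{lem-2}--\ref{lem-1}, Theorem~\ref{thm: ce}), and then verifies the hypotheses (C1)--(C4) of the abstract Thaler--Zweim\"uller arcsine theorem by Perron--Frobenius computations: Lemma~\ref{lem2-6} identifies $\widehat{\widetilde T}{}^n 1_{(\widetilde Y)_n}$ as $c_n1_{I_1^-\cup I_1^+}$, the $c_n$ are written as first-passage probabilities of the simple walk, and Karamata's Tauberian theorem gives the wandering rate $\sim\sqrt{2/\pi}\,N^{1/2}$. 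You instead build the renewal/coupling picture by hand (induce on $Y$, couple to the core chain at stopping times $\sigma_k$, identify excursion durations with first passage times) and feed it into a bivariate $1/2$-stable limit plus a random time change --- a hands-on Lamperti argument. Both routes exploit the same structural facts: $f_0\circ f_1=f_1\circ f_0=\mathrm{id}$ on $Y^c$, exact return of excursions, $1/2$-stable tails. What the paper buys is that the functional limit and the random-time change (passing from $\sigma_n$ to deterministic $N$, controlling the current incomplete visit) are delegated wholesale to the Thaler--Zweim\"uller theorem; in your route these are precisely the points you flag as the main obstacle, and filling them in amounts to re-proving the core of that theorem in the i.i.d.\ setting. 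What your route buys is transparency about where $\alpha=\tfrac12$ and the constant $b$ come from, and it avoids Perron--Frobenius bookkeeping. One step you should make explicit: Birkhoff for the annealed core chain needs ergodicity of $(T_{\mathrm{core}},\nu)$, which follows from the assumed annealed metric transitivity exactly as in the paper's Lemma~\ref{lem-2} applied to $T_{\mathrm{core}}$ (and then the $\Theta\ll\nu$ reduction is immediate from a.s.\ convergence).

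One more point worth checking carefully on both sides: carrying out the last change of variables in your argument, with $b=\nu(I_1^+)/\nu(I_1^-)$, the variable $Z=\nu(I_1^+)^2U/(\nu(I_1^+)^2U+\nu(I_1^-)^2V)$ satisfies $Z/(1-Z)=b^2\,U/V$ with $U/V$ distributed as a squared standard Cauchy, and this yields the density
\[
\frac{b}{\pi\sqrt{x(1-x)}\,\bigl(b^2(1-x)+x\bigr)},
\]
which is \eqref{eq:1103c} with $x$ replaced by $1-x$ (equivalently $b$ by $1/b$), not \eqref{eq:1103c} itself. This is not a defect of your approach: in the paper's final step, the Thaler--Zweim\"uller theorem is invoked with $A^-=(\bigcup_{k\ge2}I_k^-)^\sim$ (near $0$) but $B=[\tfrac12,1]^\sim$ (near $1$), so $\mu(B\triangle A^-)=\infty$ and the hypothesis of that theorem is not met as written; swapping the roles of $A^\pm$ gives $\beta=\nu(I_1^+)/(\nu(I_1^-)+\nu(I_1^+))$ and hence $b=\nu(I_1^-)/\nu(I_1^+)$, which matches your formula. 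The sanity check $\nu(I_1^+)\gg\nu(I_1^-)$ (so nearly all time is spent near $1$ and the limit law must concentrate near $x=1$) confirms your version. So do not take the ``identification of the constant'' to be routine --- work it out, and you will find the parameter in \eqref{eq:1103c} needs to be $\nu(I_1^-)/\nu(I_1^+)$ rather than $\nu(I_1^+)/\nu(I_1^-)$.
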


\begin{remark}
When $\nu(I_1^-) = \nu(I_1^+) $,   we have $\beta =\frac{1}{2}$ and $b=1$,  \emph{independently of the value of $\nu (I_0)$.}
 Thus, it follows from \eqref{eq:1103c} that in the case we have
\[
\lim _{N\to \infty} {\rm Prob}\left( \frac{1}{N} \sum_{n=0}^{N-1}1_{\{T^{(n)}(\Theta)\geq\frac{1}{2}\}} \leq a \right) = \int ^a _{0} \frac{1}{\pi\sqrt{x(1-x)}}  d x  = \frac{2}{\pi } \arcsin \sqrt a
\]
for each $a \in [0,1]$.
Hence, \eqref{eq:1103c} is called the generalized arcsine law. 
\end{remark}

\begin{remark}\label{rem:DK}
We show Theorem \ref{thm:main} by applying Thaler--Zweim{\"u}ller's abstract generalized arcsine law  (\cite[Theorem 3.2]{TZ2006}) to the skew-product induced by $T$ (which we will recall in Section \ref{ss:TZ}).
In the same paper, they also show the Darling--Kac law (\cite[Theorem 3.1]{TZ2006}), so 
 by combining it with the estimates  in the proof of Theorem \ref{thm:main} for wandering rates, we can show the Darling--Kac law for random dynamics with a  core random dynamics: under the assumption of Theorem \ref{thm:main}, it holds that
\[
\frac{1}{\sqrt N}\sum_{n=0}^{N-1}1_{\{T^{(n)}(\Theta)\in E\}}
\xrightarrow[N\to\infty]{\mathrm{d}}
 \frac{2\mu(E)}{\mu(E_0)}
 \vert \mathcal N\vert , 
\]
%or
%\[
%\frac{1}{\sqrt N}\sum_{n=0}^{N-1}1_{\{T^{(n)}(\Theta)\in A\}}
%\xrightarrow[N\to\infty]{\mathrm{d}}
% \frac{2\nu(I_1^-\cap A) + \nu(I_0 \cap A) + 2\nu(I_1^+\cap A) }%{\nu(I_1^-)+\nu(I_1^+)}
% \vert \mathcal N\vert , 
%\]
for any $E\subset Y$, 
where $E_0:=I_1^-\cup I_1^+$ and $\mathcal N$ is a random variable with standard normal distribution, where $\mu$ is a $T$-invariant infinite measure naturally derived from $\nu$ (see \eqref{inv2-2} for its precise definition). Refer to  Remark \ref{rem:211} for more details.
\end{remark}

\begin{remark}\label{cor:1}
As an easy consequence  
 from   Theorem \ref{thm:main}, 
we  finally 
remark 
 the following \emph{pointwise}   generalized arcsine law: 
 Assume that $\{T^{(n)}\}_{n=1}^\infty$ and $\nu$  satisfy the conditions in Theorem \ref{thm:main}. 
 Assume also that $\nu$ is a \emph{discrete} measure.
  Then, for any $y$ in the support of $\nu$, it holds that
\begin{equation}\label{eq:0326b}
\frac{1}{N} \sum_{n=0}^{N-1}1_{\{T^{(n)}(y)\geq\frac{1}{2}\}} \xrightarrow[N\to\infty]{\mathrm{d}} \frac{1}{\pi\sqrt{x(1-x)}} \cdot\frac{b}{b^{2} x+(1-x)}d x. 
\end{equation}
 (Apply Theorem \ref{thm:main} to $\Theta$ whose   distribution   is the Dirac measure at $y$, which is absolutely continuous with respect to $\nu$ since $\nu$ is discrete.) 
This observation may be useful to establish 
  \eqref{eq:0326b} for each $y$ in a large subset of $Y$ 
as demonstrated for the example in Section \ref{sss:gHY}, but \eqref{eq:0326b} may hold beyond the above setting. 
In fact, for the random dynamics in 
Section \ref{sss:pGH}, 
 only a finite set of $Y$ can be supported by an invariant probability measure $\nu$ but \eqref{eq:0326b} holds on a nontrivial interval (i.e.~an interval whose interior is nonempty) of $Y$. 
\end{remark}
\begin{remark}\label{cor:1b}
We  recall that Gharaei and Homburg \cite{GH2017} said that    the \emph{on-off intermittency} holds for $T$ if,   for any sufficiently small neighborhood $U$ of $0$ and $1$ and any $y\in (0,1)$, it  almost surely holds that 
\begin{align*}
\lim _{n\to\infty} \frac{1}{N}  \sum _{n=0}^{N-1}1_{\{T^{(n)}(y)\in U\}} =1
 \quad \text{and}
\quad
\lim _{N\to\infty}   \sum _{n=0}^{N-1}1_{ \{  T^{(n)}(y) \not\in   U\}}=\infty .
\end{align*}
From them, 
 we have $ \frac{1}{N}   \sum _{n=0}^{N-1}1_{ \{  T^{(n)}(y) \not\in   U\}} \to 0$ as $N\to \infty$.
 That is, both the scales $1$ and $N$ 
 are not nice to understand the long time behavior of $ \sum _{n=0}^{N-1}1_{ \{  T^{(n)}(y) \not\in   U\}}$, 
and 
 the Darling--Kac law in Remark \ref{rem:DK} tells the appropriate scale $\sqrt N$ together with its  limit distribution $\vert \mathcal N\vert$.
Furthermore, 
since Theorem \ref{thm:main} with ``$\{ T^{(n)}(\Theta )\in A\}$'' instead of ``$\{ T^{(n)}(\Theta )\geq \frac{1}{2}\}$'' holds for any interval $A$ including $1$ but not including $0$ (as one can see from the proof of Theorem \ref{thm:main}),  
by splitting $U$ into an interval 
  including $1$ but not including $0$ and 
its complement, 
one would see that the generalized arcsine law is also a  (much) stronger   limit theorem 
 than   the on-off intermittency 
of Gharaei--Homburg.
\end{remark}

\begin{remark}
As mentioned before, Theorem 1.2 can be easily generalized to random dynamics given by (1.3) with slopes $\lambda$ and $\lambda^{-1}$ (for any $\lambda >1$) instead of the slopes $2$ and $2^{-1}$ of (1.3). On the other hand, it is impossible to relax the condition $P(T=f_0) = P(T=f_1)=\frac{1}{2}$ because, if the condition is violated, say $P(T=f_0) < \frac{1}{2}$, then $\frac{1}{N} \sum_{n=0}^{N-1} 1_{\{ T^{(n)}(y) \ge \frac{1}{2}\}}$ almost surely converges to $1$ as $N\to \infty$; see (the proof of) \cite[Theorem 5.2]{GH2017}. Furthermore, the linearities of $f_0$ and $f_1$ around $0$ and $1$ are indispensable in our argument (see e.g.~(3.2)), so it is unclear whether the arcsine law still holds in the case when $f_1$ is nonlinear, say $f_1(x)= 2x + x^2$, near $x=0$.
\end{remark}

\subsection{Examples}\label{ss:example}

In this subsection, we give   examples that satisfy (or do not satisfy)  the hypothesis for the core random dynamics $T_{\mathrm{core}}$ of Theorem \ref{thm:main}.
\subsubsection{Core deterministic dynamics}
The simplest example is a random dynamics with a  deterministic core dynamics, that is, a random core dynamics satisfying  $h_0 =h_1$.
Notice that when $c= \frac{1}{2}$, we automatically have the case.
In such a deterministic   case, according to classical results for deterministic piecewise smooth interval maps, one can find several type of core dynamics    with and without   invariant probability measures. 
For example, when $h_0$ is a topologically mixing $\mathcal C^2$ piecewise expanding map on $Y$, then $h_0$ (and thus $T_{\mathrm{core}}$) has a unique absolutely continuous ergodic invariant probability measure $\nu$ whose support is $Y$ (cf.~\cite{LY1973}).
Furthermore, the measure $\nu$ may have different weights on $I_1^-$ and $I_1^+$ (e.g.~$c= \frac{1}{2}$, $h_0(x) = 2x- \frac{1}{4}$ on $I_1^-$ and $h_0(x)=x - \frac{1}{4}$ on $I_1^+$).
Moreover, since $h_0$ is not necessarily continuous, it is even possible that there exists no invariant probability measure for $h_0$  (e.g.~$c= \frac{1}{2}$, $h_0(x) = \frac{1}{2} x + \frac{1}{4}$ on $I_1^-$ and $h_0(x)=x - \frac{1}{4}$ on $I_1^+$). 
Refer also to   \cite{Blank2017, BPP2019} and references therein for the existence of invariant probability measures of random dynamics.

\subsubsection{Generalized Hata--Yano maps}\label{sss:gHY}
%The next example is a  family of    random maps $T$, for which  the maps  $f_0$ and $f_1$ given by 
We call the random map $T$ a \emph{generalized Hata--Yano map} if,
for a constant $0 \le \delta \le 1/6$, the maps $f_0$ and $f_1$ given by
  \[
f_0(x) =
\begin{cases}
\frac{x}{2}    \quad &(x\in [0, \frac{1}{2}+\delta ))\\
2x - 1  \quad &(x\in [\frac{1}{2}+\delta ,1])
\end{cases},
\quad
f_1(x) =
\begin{cases}
2x  \quad &(x\in [0,\frac{1}{2}-\delta ))\\
\frac{x+1}{2}    \quad &(x\in [\frac{1}{2}-\delta ,1])
\end{cases}
  \]
%  are randomly chosen  with the same probability $\frac{1}{2}$, where $0\leq \delta\leq \frac{1}{6}$.  
are randomly chosen with equal probabilities. Note that $(f_0,f_1)$ is exactly (HY) when $\delta=0$,
%Compare these $(f_0, f_1)$ with (HY), and note that $(f_0, f_1)$ is exactly (HY) when $\delta =0$: we call it \emph{generalized Hata--Yano maps}.
and that the random map $T$ admits a core random dynamics $T_{\mathrm{core}}$ for all $\delta \neq 0$  (e.g.~$c=4\delta $ when $\delta\leq \frac{1}{8}$, and $c= \frac{1}{2}$ when $\delta\geq \frac{1}{8}$). 
In fact, when $\delta \geq \frac{1}{8}$, the core random dynamics is deterministic.

For example, 
for $\delta = \frac{1}{8}$  and $c = \frac{1}{2}$,
 as depicted in Figure \ref{fig1c}, one can easily see that  $h_0=h_1$,  $h_0^2(I) = I$ for $I\coloneqq[\frac{3}{8}, \frac{5}{8})$ and $h_0^2 (x) =x$ on $I$, so $\nu := (\delta _y +\delta _{h_0(y)})/2$  is a $T_{\mathrm{core}}$-invariant probability measure satisfying the conditions of Theorem \ref{thm:main} for any $y\in I \cup h_0(I)$. 
By virtue of Remark \ref{cor:1}, pointwise generalized arcsine law \eqref{eq:0326b} holds for any $y\in I \cup h_0(I)$.
Similarly,  for each $\delta \geq \frac{1}{8}$, one can see that  $T$ has a core deterministic dynamics with $h_0^2(x) =x$ for any $x$ close to $\frac{1}{2}$ and (pointwise) generalized arcsine law holds.

\begin{figure}[hbt]
\centering
\includegraphics[bb=0 0 650 260, width=12cm]{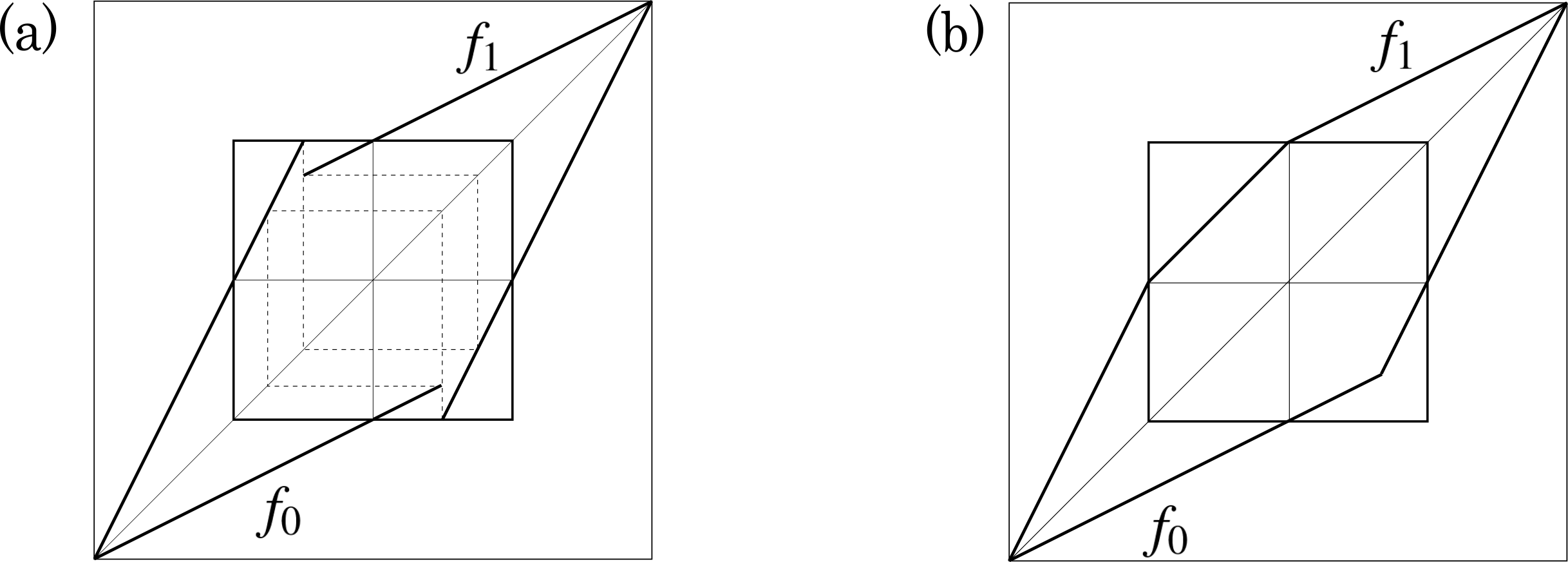}
\caption{(a) Generalized HY  map ($\delta = \frac{1}{8}$); (b) Piecewise linear GH  map (the family \eqref{eq:0329} with $\delta = \frac{1}{2}$)}
\label{fig1c}
\end{figure}

\subsubsection{Piecewise linear versions of Gharaei--Homburg   maps}\label{sss:pGH}
Finally, we consider 
 piecewise linear versions   of 
  Gharaei--Homburg   maps, 
  that is,  a random map $T$ given by a random selection with  probability $\frac{1}{2}$ from $(f_0 , f_1)$ satisfying 
  \begin{itemize}
  \item[($\ell$GH1)] both $f_0$ and $f_1$ are homeomorphisms,  and there are an integer $N_j\geq 2$ and real numbers $c_0^{(j)}=0<c_1^{(j)} < \cdots < c_{N_0}^{(j)}=1$ for $j=0,1$ such that  the restriction of $f_j$ on $[c_{i-1}^{(j)} , c_{i}^{(j)} ]$ has a constant slope for each $j=0, 1$ and $i=1, \ldots , N_j$,
  \end{itemize}
  and (GH2), (GH3).
It is straightforward to see that  
 $T$ 
   always has a core random dynamics (e.g.~for $c=\min \{ c_1^{(0)} , 1- c_{N_1-1} ^{(1)}\}$).  
For simplicity, as before, in the following we assume that the slope of  $f_0$ (resp. $f_1$) is $\frac{1}{2}$ near $x=0$ (resp. $x=1$) and $2$ near $x=1$ (resp. $x=0$).
  
Notice that if $N_j =2$, then  
 $f_j$ must have the form
\begin{equation*} 
f_0(x) =
\begin{cases}
\frac{x}{2}    \quad &(x\in [0, \frac{2}{3}  ))\\
2x - 1  \quad &(x\in [\frac{2}{3},1])
\end{cases},
\quad
f_1(x) =
\begin{cases}
2x  \quad &(x\in [0,\frac{1 }{3}))\\
\frac{x+1}{2}    \quad &(x\in [\frac{1}{3} ,1]),
\end{cases}
\end{equation*}  
which coincides with the generalized Hata--Yano map for $\delta = \frac{1}{6}$,
and thus the analysis of the case $N_0 =N_1=2$ 
 is straightforward.
Indeed, 
 it satisfies that $f_0\circ  f_1(x) =f_1\circ  f_0(x) =x$ everywhere, 
  its  core random dynamics is deterministic  (for $c=\frac{1}{2}$), $h_0$ exchanges $I_1^-=[\frac{1}{4},\frac{1}{2})$ and $I_1^+=[\frac{1}{2},\frac{3}{4})$, and $h_0^2 (x) =x$ on $Y=I_1^-\cup I_1^+$. 
Therefore,   the simplest nontrivial case may be the case when $N_0 =2$ and $N_1=3$, 
such as  the family 
\begin{equation}\label{eq:0329}
f_0(x) =
\begin{cases}
\frac{x}{2}    \quad &(x\in [0, \frac{2}{3}  ))\\
2x - 1  \quad &(x\in [\frac{2}{3},1])
\end{cases},
\quad
f_1(x) =
\begin{cases}
2x  \quad &(x\in [0,\frac{\delta }{2}))\\
x + \frac{\delta}{2}  \quad &(x\in [\frac{\delta }{2}, 1-\delta ))\\
\frac{x+1}{2}    \quad &(x\in [1-\delta ,1])
\end{cases}
\end{equation}
with $0<\delta\leq \frac{2}{3}$, see Figure \ref{fig1c}. 
For $\delta = \frac{1}{2}$, we can take   $c= \frac{1}{2}$ (core deterministic dynamics),   
$h_0$ exchanges $I_1^-$ and $I_1^+$ and the restriction of $h_0^2$ on $I_1^-$ has exactly one sink at the left endpoint $\frac{1}{4}$  whose basin of attraction is $I_1^-$.
Consequently, there exists a unique $T_{\mathrm{core}}$-invariant  probability measure $\nu =(\delta _{\frac{1}{4}} + \delta _{\frac{1}{2}})/2$, which satisfies the conditions of Theorem \ref{thm:main}.
Furthermore, pointwise generalized arcsine law \eqref{eq:0326b} for the  attracting periodic point $p= \frac{1}{4}$ holds by Remark \ref{cor:1}. 
However, unlike the generalized Hata--Yano map with $\delta \geq \frac{1}{8}$, 
Remark \ref{cor:1} does not (directly) tell whether  \eqref{eq:0326b} holds for $y$ in a nontrivial interval 
because there is no   $T_{\mathrm{core}}$-invariant probability measure on $Y$ except $\nu$.
Yet,  one can easily show \eqref{eq:0326b}  for  all $y\in I_1^-$ as follows.  
Let $I_n^- =[\frac{1}{2^{n+1}}, \frac{1}{2^n})$ and $I_n^+= [1- \frac{1}{2^n}, 1- \frac{1}{2^{n+1}})$ for $n\in \mathbb N$ (see also Section \ref{ss:22}). 
Then, it holds that $f_{j_k}\circ f_{j_{k-1}} \circ \cdots \circ f_{j_1}(y)\in I_n^\sigma$ if and only if $f_{j_k}\circ f_{j_{k-1}} \circ \cdots \circ f_{j_1}(p)\in I_n^\sigma$ for any $k \in \mathbb N$,   $(j_1,\ldots ,j_k)\in \{0,1\}^k$, $n\in \mathbb N$ and $\sigma \in \{ +,-\}$ because $y, p\in I_1^-$, $y$ belongs to the basin of attraction   of $p$ and $f_0, f_1$ are monotonically increasing. 
Hence, noticing that $ \bigcup _{n\geq 1}I_n^+ =[\frac{1}{2}, 1)$,  
we get \eqref{eq:0326b} for $y$ 
 from \eqref{eq:0326b} for $p$.

\subsubsection{Problem}\label{ss:problem}
 Finally, we ask a question on   the above examples for  further applications of  Theorem \ref{thm:main}.
\begin{problem}\label{prob:14}
Does there exist  a $T_{\mathrm{core}}$-invariant probability measure satisfying the condition of Theorem \ref{thm:main} for all generalized Hata--Yano maps (except $\delta =0$) and piecewise linear versions of Gharaei--Homburg maps?
\end{problem}
 
Notice  that  in general
 the maps $h_0$ and $h_1$ generating the core random dynamics $T_{\mathrm{core}}$ are not continuous,  even if both the maps $f_0$ and $f_1$ generating $T$ 
  are continuous (see Figure \ref{fig2}). 
Thus, 
even the existence of a $T_{\mathrm{core}}$-invariant probability measure for piecewise linear versions of Gharaei--Homburg maps 
 might be a nontrivial problem.

%\section{Proof}
\section{The Thaler--Zweim{\"u}ller Theorem}\label{ss:TZ}

The proof of Theorem \ref{thm:main} is based on the Thaler--Zweim{\"u}ller theorem, which gives a criterion for the generalized arcsine law of abstract infinite ergodic systems.
In this subsection we briefly recall the Thaler--Zweim{\"u}ller theorem.

In this subsection, let $\tau$ be a conservative, ergodic and measure preserving transformation over a $\sigma$-finite and infinite measure space $(X,\mathcal{B},m)$.
Here we mean by {\it conservative} that any set $W\in\mathcal{B}$ with $\tau^{-k}W\cap \tau^{-l}W=\emptyset\pmod{m}$ for all distinct $k,l\in\mathbb{N}$ is a $\mu$-null set and by {\it ergodic} that any set $E\in\mathcal{B}$ with $\tau^{-1}E=E\pmod{m}$ satisfies $m(E)=0$ or $m(X\setminus E)=0$.
First we recall the definition of the Perron--Frobenius operator of $\tau$ (cf.~\cite{Aaronson1997}).
\begin{definition}
\emph{The Perron--Frobenius operator} $\mathcal{L}_{\tau}:L^1(X,m)\to L^1(X,m)$ corresponding to $\tau$ with respect to $m$ is   defined by
\begin{align*}
\mathcal{L}_{\tau}\phi=\frac{d((\phi\cdot m )\circ \tau^{-1})}{dm}\quad\text{for }\phi\in L^1(X,m)
\end{align*}
where $\phi\cdot m$ is a signed measure given by $(\phi\cdot m)(A)=\int_{A}\phi\, dm$ for $A\in \mathcal B$.
\end{definition}
The Perron--Frobenius operator $\mathcal{L}_{\tau}$ is also characterized by
\begin{align*}
\int_X \mathcal{L}_{\tau}\phi \cdot \psi \, dm=\int_X\phi \cdot \psi \circ \tau\, dm
\end{align*}
for $\phi \in L^1(X,m)$ and $\psi\in L^{\infty}(X,m)$.

We then recall some necessary definitions for stating the Thaler--Zweim\"{u}ler theorem.
In what follows, let $A\in \mathcal B$ be a set of positive and finite $m$-measure.
Recall that {\it the first return time} for $A$ is a map $\varphi=\varphi_{A}: A\to\mathbb{N}\cup\{\infty\}$ defined by
$\varphi(x)\coloneqq\inf\left\{n\in\mathbb{N} : \tau ^nx\in A\right\}$
for $x\in A$, where $\inf\emptyset=\infty$.
When $\tau$ is conservative and ergodic, $\varphi$ can be naturally extended to the function defined and finite $m$-almost everywhere, which is also written by the same symbol $\varphi=\varphi_{A}$ and is referred to be the first return time for $A$.

\begin{definition}
A measurable function $H\ge 0$ supported on $A$ is called {\it uniformly sweeping} for $A$ if there is some $K\in\mathbb{N}_0$ such that
\begin{align*}
\operatorname*{ess\ inf} _{x\in {A}} \sum_{k=0}^{K} \mathcal{L}_{\tau}^{k} H(x)>0. 
\end{align*}
For mutually disjoint sets $A^-,A,A^+\in\mathcal B$, we say that a set ${A}$ {\it dynamically separates $A^-$ and $A^+$} 
if $\tau^k x\in A^-$ and $\tau^l x\in A^+$ for some $k,l\in\mathbb{N}$ imply $\tau^nx\in {A}$ for some $n\in\mathbb{N}$ with $k<n<l$ or $l<n<k$.
For a set ${A}$, {\it the wandering rate} is defined by
\begin{align*}
w_{N}({A})\coloneqq\sum_{n=0}^{N-1} m\left({A} \cap\{\varphi>n\}\right)=\int_{{A}}\left(\sum_{n=0}^{N-1} \mathcal{L}_{\tau}^{n} 1_{A_{n}}\right) dm
\end{align*}
where $A_{0}\coloneqq A$ and $A_{n}\coloneqq A^{c} \cap\{\varphi=n\}$ ($n \in \mathbb{N}$),
and when $A$ dynamically separates $A^-$ and $A^+$ we also define
\begin{align*}
w_{N}\left(A, A^{\pm}\right)&\coloneqq\sum_{n=0}^{N-1} m\left(A \cap \tau^{-1} A^{\pm} \cap\{\varphi>n\}\right)\\
&=m\left(A \cap \tau ^{-1} A^{\pm}\right)+\sum_{n=1}^{N-1} m\left(A_{n} \cap A^{\pm}\right).
\end{align*}
\end{definition}

Now we recall the Thaler--Zweim\"{u}ler theorem.
Recall that the \emph{strong distributional convergence} of random variables $\{ L_N\} _{N\in \mathbb N}$ on a measure space $(X,\mathcal B,m)$ to a random variable $R$, denoted by $L_N\xrightarrow[N\to\infty]{\mathrm{d}(m)} R$, means that   $\{ L_N\} _{N\in \mathbb N}$ converges to $R$ in distribution with respect to any probability measure $m_1$ which is absolutely continuous with respect to $m$. 

\begin{theorem}[\cite{TZ2006}]\label{thm:t}
Let $\tau$ be a conservative, ergodic and measure preserving transformation on a $\sigma$-finite and infinite measure space $(X,\mathcal B, m)$.
Suppose the following conditions:
\begin{itemize}
\item[(C1)]
There is a set $A\in\mathcal B$ with $0<m(A)<\infty$ such that 
\begin{align*}
\lim_{N\to\infty}\frac{1}{w_{N}(A)} \sum_{n=0}^{N-1} \mathcal{L}_{\tau}^{n} 1_{A_{n}} = H \quad \text {uniformly on } A
\end{align*}
for some $H:A\to [0,\infty)$ uniformly sweeping.
\item[(C2)] 
$w_{(\cdot)}(A):\mathbb{N}\to\mathbb{R}_+$ is regularly varying with exponent $1-\alpha>0$ i.e., $w_N(A)=N^{1-\alpha}\cdot\psi(N)$ for some $\psi:\mathbb{R}_+\to\mathbb{R}_+$ such that for any $\lambda>0$ we have
\begin{align*}
\lim_{x\to\infty}\frac{\psi(x\lambda)}{\psi(x)}=1.
\end{align*}
\item[(C3)]
There is a partition $X=A^-\cup A\cup A^+$ with $m(A^-)=m(A^+)=\infty$ such that $A$ dynamically separates $A^-$ and $A^+$, and
\begin{align*}
\lim_{N\to\infty}\frac{1}{w_{N}\left(A, A^-\right)} \sum_{n=0}^{N-1} \mathcal{L}_{\tau}^{n} 1_{A_{n} \cap A^-} = H^{-} \quad \text { uniformly on } A
\end{align*}
for some $H^-:A\to[0,1)$ uniformly sweeping.
\item[(C4)]
There exists some $\beta\in(0,1)$ such that
\begin{align*}
\lim_{N\to\infty}\frac{w_{N}\left(A, A^-\right)}{w_{N}(A)} = \beta.
\end{align*}
\end{itemize}
Then, the strong distributional convergence
\begin{align*}
\frac{1}{N} \sum_{n=0}^{N-1} 1_{B} \circ \tau^{n}
\xrightarrow[N\to\infty]{\mathrm{d}(m)}
\frac{b \sin \pi \alpha}{\pi}  \frac{x^{\alpha-1}(1-x)^{\alpha-1}}{b^{2} x^{2 \alpha}+2 b x^{\alpha}(1-x)^{\alpha} \cos \pi \alpha+(1-x)^{2 \alpha}} d x 
\end{align*}
holds for any $B\in\mathcal B$ satisfying $\mu(B\triangle A^-)<\infty$, where $b\coloneqq(1-\beta)/\beta$.
\end{theorem}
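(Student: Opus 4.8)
The plan is to reduce the statement to a limit theorem for a marked renewal process attached to the first return map of $\tau$, and then to invoke a generalized arcsine law of Lamperti type. First I would replace $B$ by $A^-$: since $m(B\triangle A^-)<\infty$ and $\tau$ is conservative and ergodic with $m(X)=\infty$, one has $\frac{1}{N}\sum_{n=0}^{N-1}1_{B\triangle A^-}\circ\tau^n\to 0$ $m$-a.e.\ (a standard feature of conservative ergodic transformations of infinite measure; see \cite{Aaronson1997}), hence also with respect to every probability $m_1\ll m$, so it is enough to treat $B=A^-$. Then I would pass to the Kakutani skyscraper over the first return map $\tau_A\colon A\to A$ with height function $\varphi=\varphi_A$, normalizing $m|_A$ to a probability measure. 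By (C2) and Karamata's theorem the tail $m(A\cap\{\varphi>n\})$ is regularly varying of index $-\alpha$, so $\varphi$ lies in the domain of attraction of a one-sided $\alpha$-stable law.

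Next I would decompose the occupation time of $A^-$ along excursions. For $x$ in the tower let $\ell_N(x)$ be the number of visits of $x,\tau x,\dots,\tau^{N-1}x$ to $A$, and $\varphi_k(x)=\varphi(\tau_A^kx)$ the successive return times, so that $\sum_{k<\ell_N}\varphi_k\le N<\sum_{k\le\ell_N}\varphi_k$. The dynamical separation hypothesis in (C3) forces each excursion of length $\varphi_k\ge 2$ to lie entirely in $A^-$ or entirely in $A^+$; putting $\xi_k(x)=1_{A^-}\!\big(\tau(\tau_A^kx)\big)$, letting $\rho_N$ be the time already spent in the single incomplete excursion straddling $N$, and $R_N=\rho_N\,1\{\text{that excursion lies in }A^-\}$, one gets
\[
\frac{1}{N}\sum_{n=0}^{N-1}1_{A^-}\!\circ\tau^n
=\frac{\sum_{k<\ell_N}(\varphi_k-1)\,\xi_k+R_N}{\sum_{k<\ell_N}(\varphi_k-1)+\rho_N}+o(1),
\]
where the denominator equals $N-\ell_N$ with $\ell_N=o(N)$ $m$-a.e., and the incomplete-excursion boundary contributes only $O(1)$ to the numerator.

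The core of the argument is to identify the distributional limit of the ratio above. Here the uniform hypotheses do the work: (C1) together with the uniformly sweeping property is a Darling--Kac/Doeblin condition for $\tau_A$, yielding that $\ell_N$, normalized by the associated return sequence, converges to a normalized Mittag--Leffler law of index $\alpha$, uniformly over $m_1\ll m$; (C3) controls the marks $\xi_k$ along the heavy excursions; and (C4) fixes the asymptotic proportion $\beta$ of the wandering rate carried by the $A^-$-excursions. Together these let me couple $(\varphi_k-1,\xi_k)$, along the Darling--Kac skeleton, with an i.i.d.\ marked-renewal model whose inter-arrival tail is regularly varying of index $-\alpha$ and whose marks equal $1$ with asymptotic weight $\beta$. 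For such a model, the scaling and independent-thinning properties of $\alpha$-stable subordinators, together with the Dynkin--Lamperti age/residual split at time $N$, give
\[
\frac{\sum_{k<\ell_N}(\varphi_k-1)\,\xi_k+R_N}{\sum_{k<\ell_N}(\varphi_k-1)+\rho_N}
\xrightarrow[N\to\infty]{\mathrm d}
\frac{\beta^{1/\alpha}S^-}{\beta^{1/\alpha}S^-+(1-\beta)^{1/\alpha}S^+},
\]
with $S^-,S^+$ independent positive $\alpha$-stable; computing the density of the right-hand side reproduces exactly the stated limit density with $b=(1-\beta)/\beta$. Since every step is carried out uniformly over initial measures $m_1\ll m$ --- this insensitivity to the starting distribution being precisely what the uniformly sweeping hypotheses provide --- the convergence is strong distributional, $\xrightarrow{\mathrm d(m)}$.

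The hard part will be this last step: upgrading the one-dimensional Darling--Kac convergence of $\ell_N$ to the joint distributional convergence producing the ratio limit --- including the boundary terms $R_N$ and $\rho_N$ --- uniformly in $m_1\ll m$, in the regime $\alpha<1$ where a single excursion can have length of order $N$, so that the incomplete excursion cannot be discarded. This is exactly where the full strength of (C1) and (C3) --- the uniform convergence of $w_N(A)^{-1}\sum_{n=0}^{N-1}\widehat\tau^n1_{A_n}$ and $w_N(A,A^-)^{-1}\sum_{n=0}^{N-1}\widehat\tau^n1_{A_n\cap A^-}$ to the uniformly sweeping limits $H$ and $H^-$ --- is needed: it is what allows the genuinely dependent sequence $(\varphi_k-1,\xi_k)$ to be replaced, in the relevant scaling limit, by an asymptotically independent marked renewal model to which Lamperti's generalized arcsine law applies.
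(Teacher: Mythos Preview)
The paper does not prove this theorem at all: it is quoted verbatim from Thaler--Zweim\"{u}ller \cite{TZ2006} (their Theorem~3.2) and used as a black box in the proof of Theorem~\ref{thm:main}. There is therefore nothing in the present paper to compare your proposal against.

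For what it is worth, your outline is a plausible reconstruction of the strategy behind the cited result: the reduction from $B$ to $A^-$ via $\frac{1}{N}\sum 1_{B\triangle A^-}\circ\tau^n\to 0$, the excursion decomposition using dynamical separation, the regular variation of the return-time tail via Karamata, and the passage to a Lamperti-type ratio of stable variables are the expected ingredients. You have also correctly located the genuinely difficult point, namely the joint distributional convergence (including the straddling excursion) uniformly over $m_1\ll m$, which is where (C1) and (C3) do their real work. But none of this is carried out in the paper under review; if you want to verify the argument you must consult \cite{TZ2006} directly.
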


%\section{Proof}
\section{$T$-invariant measure}\label{ss:22}

In this section, as a preliminary for the proof of Theorem \ref{thm:main}, we construct a $T$-invariant measure $\mu$ from the   $T_{\mathrm{core}}$-invariant measure $\nu$ of Theorem \ref{thm:main}.
We first introduce a partition $\zeta$ of $[0,1)$ by
\begin{align*}
\zeta=\left\{I_n^-, I_0, I_n^+\ :\ n\in\mathbb{N}\right\}
\end{align*}
where we set, for each $n\in\mathbb{N}$,
\begin{align*}
I_{n}^-=\left[\frac{c}{2^{n}},\frac{c}{2^{n-1}}\right),\quad
I_0=\left[c,1-c\right),\quad
I_{n}^+=\left[1-\frac{c}{2^{n-1}},1-\frac{c}{2^{n}}\right).
\end{align*}

In order to construct a $\sigma$-finite invariant measure for the entire random dynamical system $T$, we assume that a partial random dynamical system $T_{\mathrm{core}}$ on $I_1^-\cup I_0\cup I_1^+$ with $\mathbb{P}(T_{\mathrm{core}}= h _0) =\mathbb{P}(T_{\mathrm{core}} =h _1) = \frac{1}{2}$ admits an invariant probability measure $\nu$ supported on $I_1^-\cup I_0\cup I_1^+$, that is,
\begin{align}\label{invariance2-1}
 (f_1)_* \nu_- +  \frac{1}{2}(f_0)_* \nu_0 +  \frac{1}{2}(f_1)_* \nu_0 +  (f_0)_* \nu_+ =\nu_- + \nu_0 + \nu_+,
\end{align}
where $\nu_-(\,\cdot\,)=\nu(\,\cdot\cap I_1^-)$, $\nu_0(\,\cdot\,)=\nu(\,\cdot\cap I_{0})$ and $\nu_+(\,\cdot\,)=\nu(\,\cdot\cap I_1^+)$.
If we set a measure $\mu$ on $[0,1]$ as
\begin{align}\label{inv2-2}
\mu=\begin{cases}
2(f_0)_*^{n-1}\nu_- & \text{on $I_{n}^-$ ($n\in\mathbb{N}$),}\\
\nu_0 & \text{on $I_{0}$},\\
2(f_1)_*^{n-1}\nu_+ & \text{on $I_{n}^+$ ($n\in\mathbb{N}$)},
\end{cases}
\end{align}
then $\mu$ is a $\sigma$-finite invariant measure for $T$ which will be shown in the next lemma.
%Note that $\nu$ is not necessarily absolutely continuous with respect to the Lebsgue measure on $[0,1]$ and neither is the resulting invariant measure $\mu$ when $\nu$ is not.
Note that $\nu$ and hence $\mu$ are possibly singular to the Lebesgue measure on $[0,1]$.

\begin{lemma}\label{lem2-3}
Let $\nu$ be an invariant Borel probability measure for $T_{\mathrm{core}}$.
Then $\mu$ given by the equation (\ref{inv2-2}) is a $\sigma$-finite $T$-invariant Borel measure 
for which we have
$\mu([\epsilon,1-\epsilon])<\infty$ for any $\epsilon\in(0,\frac{1}{2})$.

Moreover,
if $\nu (I_1^-)\nu (I_1^+)>0$ holds then
$\mu([0,\epsilon))=\mu((1-\epsilon,1])=\infty$ for any $\epsilon\in(0,\frac{1}{2})$ and hence $\mu$ is a $\sigma$-finite and infinite measure.
\end{lemma}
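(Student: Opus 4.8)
The plan is to verify the three assertions of Lemma \ref{lem2-3} in turn: first that $\mu$ defined by \eqref{inv2-2} is $T$-invariant, then that it is $\sigma$-finite with $\mu([\epsilon,1-\epsilon])<\infty$, and finally that $\mu$ has infinite mass near $0$ and $1$ when $\nu(I_1^-)\nu(I_1^+)>0$. The key structural observation is that the partition $\zeta$ is adapted to the dynamics: on the ``tail'' pieces $I_n^-$ for $n\ge 2$ the only way to map in is via $f_1$ from $I_{n-1}^-$ (since $f_1(x)=2x$ there doubles the distance to $0$), while $f_0$ pushes $I_n^-$ to $I_{n+1}^-$; symmetrically near $1$ with the roles of $f_0,f_1$ swapped; and the ``core'' region $I_1^-\cup I_0\cup I_1^+=Y$ interacts with the tails only through its two outermost pieces $I_1^-$ and $I_1^+$.

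For invariance, I would compute $\tfrac12(f_0)_*\mu+\tfrac12(f_1)_*\mu$ piece by piece against the partition $\zeta$. For a test set contained in $I_n^-$ with $n\ge 2$, only the branch $f_1|_{I_{n-1}^-}(x)=2x$ and the branch $f_0|_{I_{n+1}^-}$ contribute; using $\mu|_{I_n^-}=2(f_0)_*^{n-1}\nu_-$ and the relation $(f_1)_*\bigl(2(f_0)_*^{n-2}\nu_-\bigr)$ restricted appropriately, one checks the pushforwards telescope correctly. The only nonroutine bookkeeping is at the three central pieces $I_1^-$, $I_0$, $I_1^+$: here one collects the contribution coming \emph{into} $Y$ from $I_2^-$ via $f_0$ and from $I_2^+$ via $f_1$ (which by the definition of $\mu$ on $I_2^\pm$ equals $(f_0)_*\nu_-$ and $(f_1)_*\nu_+$, the ``right'' normalization with the factor $2$ absorbed by the $\tfrac12$ from the random choice), together with the internal core transitions, and matches them against \eqref{invariance2-1}, i.e.\ against the hypothesis that $\nu$ is $T_{\mathrm{core}}$-invariant. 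The factor $2$ in \eqref{inv2-2} is exactly what is needed to reconcile the probabilistic $\tfrac12(f_0)_*+\tfrac12(f_1)_*$ averaging on the full system with the deterministic-looking expansion along the tails. This matching of central pieces with \eqref{invariance2-1} is the step I expect to require the most care.

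For $\sigma$-finiteness and finiteness on compact subsets of $(0,1)$: since $[\epsilon,1-\epsilon]$ meets only finitely many pieces of $\zeta$ (all the $I_n^\pm$ with $c/2^{n-1}>\epsilon$ or $1-c/2^{n-1}<1-\epsilon$, plus $I_0$), and on each such piece $\mu$ is a finite pushforward of the finite measures $\nu_\pm,\nu_0$, the total is finite; $\sigma$-finiteness of $\mu$ on all of $[0,1]$ then follows since $[0,1]=\{0,1\}\cup\bigcup_n (I_n^-\cup I_0\cup I_n^+)$ is a countable union of sets of finite $\mu$-measure (the two endpoints carrying no mass, as each $(f_j)_*^{k}\nu_\pm$ is supported away from $\{0,1\}$).

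For the infinite-mass claim, I would use that $f_0|_{I_1^-}(x)=x/2$ is a bijection $I_1^-\to I_2^-$, hence $(f_0)_*^{n-1}\nu_-$ is a measure of total mass $\nu(I_1^-)$ supported on $I_n^-$; therefore $\mu(I_n^-)=2\nu(I_1^-)$ for every $n\in\mathbb N$, so $\mu([0,\epsilon))\ge\sum_{n:\,c/2^{n-1}\le\epsilon} 2\nu(I_1^-)=\infty$ as soon as $\nu(I_1^-)>0$, and symmetrically $\mu((1-\epsilon,1])=\infty$ when $\nu(I_1^+)>0$. Combined with the previous paragraph this gives that $\mu$ is $\sigma$-finite and infinite, completing the proof. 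The one point to state carefully here is that $f_0$ restricted to $I_1^-$ (respectively $f_1$ restricted to $I_1^+$) is given by the linear branch $x\mapsto x/2$ (respectively $x\mapsto(x+1)/2$) of \eqref{eq:1103}, which maps $I_1^-$ onto $I_2^-$ bijectively and preserves total mass, so no mass is lost along the tail and the series genuinely diverges.
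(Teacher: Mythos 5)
Your approach is essentially the same as the paper's: both verify $T$-invariance by direct piecewise computation against the partition $\zeta$ (telescoping the pushforwards along the tails using $f_0\circ f_1=f_1\circ f_0=\mathrm{id}$ outside $Y$, and reducing the three central pieces to the $T_{\mathrm{core}}$-invariance \eqref{invariance2-1} together with the incoming contributions from $I_2^\pm$), deduce $\sigma$-finiteness and local finiteness from finiteness on each partition element, and get infinite mass from $\mu(I_n^\pm)=2\nu(I_1^\pm)$ for all $n$.

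One thing worth flagging: your tail bookkeeping has the two branches swapped. With the paper's convention $I_n^-=[c/2^n,c/2^{n-1})$, larger $n$ means closer to $0$; on $[0,c/2)$ it is $f_0(x)=x/2$ that contracts toward $0$ (so $f_0$ carries $I_{n-1}^-$ onto $I_n^-$) and $f_1(x)=2x$ that expands away from $0$ (so $f_1$ carries $I_{n+1}^-$ onto $I_n^-$). Hence for a test set in $I_n^-$ with $n\ge 2$ the two contributing branches are $f_0|_{I_{n-1}^-}$ and $f_1|_{I_{n+1}^-}$, not ``$f_1|_{I_{n-1}^-}$ and $f_0|_{I_{n+1}^-}$'' as written, and both contribute (not only $f_1$). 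This is an exposition slip rather than a strategic gap, but it would propagate index errors if carried into the detailed pushforward computation.
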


\begin{proof} 
We check the statement by a direct calculation.
For any $A\subset I_n^-$ where $n\ge 2$, we have $f_1^{-1}A\subset I_{n+1}^-$ and $f_0^{-1}A\subset I_{n-1}^-$.
Then from the construction of $\mu$ in (\ref{inv2-2}),
\begin{align*}
\mathbb{E}\left[\mu\left(T^{-1}A\right)\right]
&=\left(\frac{1}{2}(f_1)_*\mu +\frac{1}{2}(f_0)_*\mu\right)(A)\\
&=\frac{1}{2}\left( \mu\left(f_1^{-1}A\right)+\mu\left(f_0^{-1}A\right) \right)\\
&=\frac{1}{2}\left( 2\nu_-\left(f_0^{-n}\left(f_1^{-1}A\right)\right)+2\nu_-\left(f_0^{-(n-2)}\left(f_0^{-1}A\right)\right) \right)\\
&=\nu_-\left(f_0^{-(n-1)}A\right)+\nu_-\left(f_0^{-(n-1)}A\right)\\
&=\mu(A).
\end{align*}
In the second last equality, we used the fact that $f_0^{-1}=f_1$ on $I_n^-$ for $n\ge2$.
For the case on $I_n^+$ ($n\ge2$), we can calculate by the same manner and we omit it.
On $I_1^-\cup I_0\cup I_1^+$, we also have for any $A\subset I_1^-\cup I_0\cup I_1^+$,
\begin{align*}
\mathbb{E}\left[\mu\left(T^{-1}A\right)\right]
&=\left(\frac{1}{2}(f_1)_*\mu +\frac{1}{2}(f_0)_*\mu\right)(A)\\
&=\frac{1}{2}\bigg( \mu\left(f_1^{-1}A\cap I_1^-\right) +\mu\left(f_1^{-1}A\cap I_0\right) +\mu\left(f_1^{-1}A\cap I_2^-\right)\\
&\qquad\qquad \mu\left(f_0^{-1}A\cap I_1^+\right) +\mu\left(f_0^{-1}A\cap I_0\right) +\mu\left(f_0^{-1}A\cap I_2^+\right) \bigg)\\
&=\frac{1}{2}\bigg( 2\nu_-\left(f_1^{-1}A\right) +\nu_0\left(f_1^{-1}A\right) +2\nu_-\left(f_0^{-1}\left(f_1^{-1}A\right)\right)\\
&\qquad\qquad 2\nu_+\left(f_0^{-1}A\right) +\nu_0\left(f_0^{-1}A\right) +2\nu_+\left(f_1^{-1}\left(f_0^{-1}A\right)\right) \bigg).
\end{align*}
From the equation (\ref{invariance2-1}) and that $f_0$ and $f_1$ are inverse maps to each other on $I_n^{\pm}$ ($n\ge2$), we get
\begin{align*}
\mathbb{E}\left[\mu\left(T^{-1}A\right)\right]
&=\nu_-(A)+\nu_0(A)+\nu_+(A) +\nu_-(A)+\nu_+(A)\\
&=\mu(A)
\end{align*}
and hence $\mu$ is $T$-invariant.

By the equation (\ref{inv2-2}), we have
\begin{align*}
\mu(I_{n+1}^-)=2\nu_-\left(f_0^{-n}I_{n+1}^-\right)=2\nu_-\left(f_0^{-n}\left(f_1^{-n}I_{1}^-\right)\right)=2\nu_-(I_1^-)<\infty
\end{align*}
and similarly $\mu(I_{n+1}^+)=2\nu_+(I_1^+)<\infty$ for any $n\in\mathbb{N}$.
Then the other claims are valid.
\end{proof}

\begin{remark}
When a $T_{\mathrm{core}}$-invariant probability measure $\nu$ is Lebesgue-absolutely continuous, we can have the density function of the resulting $\sigma$-finite $T$-invariant measure $\mu$:
let $\varphi_-$, $\varphi_0$ and $\varphi_+$ be the densities of $\nu_-$, $\nu_0$ and $\nu_+$, respectively.
Then the density fucntion of $\mu$ can be represented as
\begin{align*}
\varphi(x)\coloneqq\frac{d\mu}{d{\rm Leb}}(x)
=\begin{cases}
2^{n}\varphi_-(2^{n-1}x) & x\in I_{n}^-\ (n\in\mathbb{N}),\\
\varphi_0(x) & x\in I_0,\\
2^{n}\varphi_+(2^{n-1}x-2^{n-1}+1) & x\in I_n^+\ (n\in\mathbb N).
\end{cases}
\end{align*}
\end{remark}

\section{Proof of Theorem \ref{thm:main}}
\subsection{Skew-product representation}
This subsection is devoted to prepare the skew-product transformation corresponding to our random dynamical system.
The proof of our main result is based on the application of the Thaler--Zweim\"{u}ler theorem to the  skew-product transformation. 

We let
$(\Omega,\mathcal B (\Omega),\mathbb P)$ be a probability space with $\Omega=\{0,1\}^{\mathbb N}$, the Borel field of $\Omega$, and
\begin{align*}
\mathbb P \left(\omega=\{\omega_1, \omega_2, \cdots\}\in\Omega : \omega_1=i_1,\cdots,\omega_n=i_n\right) = \frac{1}{2^n}
\end{align*}
for any $i_1,\cdots,i_n\in\{0,1\}$ and $n\in\mathbb N$.
For notational simplicity, we denote 
$
B^\sim=\Omega\times B
$
 for each $B\in\mathcal B([0,1])$, and equip $[0,1]^\sim = \Omega\times[0,1]$ with the product topology of $\Omega$ and $[0,1]$. We write $\mathcal B([0,1]^\sim)$ for the Borel field of $[0,1]^\sim$. 
Then we define the skew-product transformation on $[0,1]^{\sim}$, $\widetilde{T}:[0,1]^\sim\to[0,1]^\sim$ by
\begin{align*}
\widetilde{T}(\omega,x) = (\theta\omega, f_{\omega_1}(x))
\end{align*}
for
$\omega = (\omega_1, \omega_2, \cdots)\in\Omega$ and $x\in[0,1],$
where $\theta:\Omega\to\Omega$ is the left shift map $\theta(\omega_1, \omega_2, \cdots)=(\omega_2, \omega_3, \cdots)$.
Note that from the definition of $\widetilde{T}$ we have
\begin{align}\label{eq:0401eb}
\widetilde{T}^n(\omega,x)=(\theta^n\omega,  f_\omega^{(n)}(x)), \quad f_\omega^{(n)} =f_{\omega_n}\circ\cdots\circ f_{\omega_1}
\end{align}
for each $n\in\mathbb{N}$.
Given $n\leq m$ and $\omega =(\omega _1, \ldots ,\omega _m)\in \{0,1\} ^m$, we still use the notation $f^{(n)}_\omega$ to denote $f_{\omega_n}\circ\cdots\circ f_{\omega_1}$, so that the invariance of $\mu$ for $T$ implies 
\begin{equation}\label{eq:0218}
\mathbb{E}\left[\mu(T^{(-n)}A)\right] = \frac{1}{2^n}\sum _{\omega\in\{0,1\}^n} \mu \left(f_\omega ^{(-n)}A\right) = \mu (A)
\end{equation} 
for every Borel set $A$ and $n\in \mathbb N$, where $f^{(-n)}_\omega(A)$ is the inverse image of $A$ by $f^{(n)}_\omega$. 
For simplicity, we write $f_\omega^{(0)}$ for the identity map for each $\omega$. Similarly, we define $h_\omega ^{(n)}$ for  $n\in\mathbb N$ and $\omega\in\{0,1\}^n$,  from the measurable maps $h_0$, $h_1$ on $Y$. 

Let $\widetilde{\mu}\coloneqq\mathbb P\otimes\mu$ where $\mu$ is a $\sigma$-finite and infinite $T$-invariant measure given in the equation (\ref{inv2-2}) under the assumptions of Lemma \ref{lem2-3}, that is, $\nu$ is a $T_{\mathrm{core}}$-invariant probability measure and $\nu(I_1^-)\nu(I_1^+)>0$.
Then it is well known (see \cite{Kifer86} for example) that $\widetilde{\mu}$ is also an invariant measure for the skew-product transformation $\widetilde{T}$ which is  $\sigma$-finite and infinite.
From Lemma \ref{lem2-3}, we also have for each $\epsilon\in(0,\frac{1}{2})$,
\begin{align*}
\widetilde{\mu}\left( (\epsilon,1-\epsilon)^{\sim} \right)<\infty\text{ and }\
\widetilde{\mu}\left([0,\epsilon)^{\sim}\right)=
\widetilde{\mu}\left((1-\epsilon,1]^{\sim}\right)=\infty.
\end{align*}

Furthermore, we have the following   lemmas about the annealed metrical transitivity of $(T, \mu)$ and the ergodicity of $(\widetilde T, \widetilde{ \mu})$.
Recall that a random dynamical system $(T,\mu)$ is called \emph{ergodic} if   any $T$-invariant set $A\in\mathcal{B}([0,1])$ in the sense that $\mathbb{E}[1_A\circ T]=1_A$ is either $A=\emptyset$ or $[0,1]\setminus A=\emptyset\pmod{\mu}$.

\begin{lemma}\label{lem-2}
If $(T,\mu)$ is annealed metrically transitive, then $(T,\mu)$ is ergodic.
\end{lemma}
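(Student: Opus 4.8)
The plan is to show that annealed metrical transitivity of $(T,\mu)$ implies ergodicity by a contrapositive argument: I assume there is a $T$-invariant set $A\in\mathcal B([0,1])$ with $\mathbb E[1_A\circ T]=1_A$ but $\mu(A)>0$ and $\mu([0,1]\setminus A)>0$, and I derive a contradiction with the annealed metrical transitivity applied to $A$ and $B=[0,1]\setminus A$. First I would unpack the invariance condition $\mathbb E[1_A\circ T]=1_A$: since $T$ equals $f_0$ and $f_1$ each with probability $\tfrac12$, this reads $\tfrac12(1_A\circ f_0+1_A\circ f_1)=1_A$ $\mu$-a.e.; because each term is an indicator and hence $\{0,1\}$-valued, the average can equal an indicator only if $1_A\circ f_0=1_A\circ f_1=1_A$ $\mu$-a.e., i.e.\ $f_0^{-1}A=A=f_1^{-1}A\pmod\mu$. (Here I use that $\mu\circ f_j^{-1}\ll\mu$, which follows from the $T$-invariance identity $\tfrac12((f_0)_*\mu+(f_1)_*\mu)=\mu$ in Lemma \ref{lem2-3}.)

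Next I would iterate this: for every $n\in\mathbb N$ and every word $\omega\in\{0,1\}^n$ one gets $f^{(-n)}_\omega A=A\pmod\mu$, hence $f^{(-n)}_\omega A\cap B=\emptyset\pmod\mu$ for $B=[0,1]\setminus A$. Averaging over $\omega$ gives $\mathbb E[\mu(T^{(-n)}A\cap B)]=0$ for all $n\geq0$. On the other hand, $\mu(A)>0$ and $\mu(B)>0$ (with both possibly infinite, but certainly nonzero) means $\mu(A)\mu(B)>0$, so annealed metrical transitivity of $(T,\mu)$ forces $\mathbb E[\mu(T^{(-n)}A\cap B)]>0$ for some $n\geq0$ — a contradiction. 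This yields $\mu(A)=0$ or $\mu(B)=0$, which is exactly ergodicity of $(T,\mu)$.

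The one point that needs care — and which I expect to be the main (minor) obstacle — is the reduction from $\tfrac12(1_A\circ f_0+1_A\circ f_1)=1_A$ to the two separate identities $1_A\circ f_j=1_A$. This is immediate pointwise when all three functions are exactly $\{0,1\}$-valued, but one must phrase it modulo $\mu$: on the set where $1_A\circ f_0\ne 1_A\circ f_1$ the average is $\tfrac12\notin\{0,1\}$, so that set is $\mu$-null, and then on its complement both equal $1_A$. A secondary subtlety is that $\mu$ is only $\sigma$-finite and infinite, so "$\mu(A)\mu(B)>0$'' should be read as ``$\mu(A)>0$ and $\mu(B)>0$''; the definition of annealed metrical transitivity in the excerpt is stated with the product $\nu(A)\nu(B)>0$, and I would simply note this is meant in the sense of both factors being positive (or, if one prefers to stay strictly within finite-measure sets, intersect $A$ and $B$ with $[\epsilon,1-\epsilon]$ for small $\epsilon$, on which $\mu$ is finite by Lemma \ref{lem2-3}, choosing $\epsilon$ so that both intersections still have positive measure). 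Everything else is a routine unwinding of definitions.
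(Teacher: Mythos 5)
Your argument is correct and is essentially the paper's own proof: both reduce the invariance identity $\tfrac12(1_{f_0^{-1}A}+1_{f_1^{-1}A})=1_A$ to $f_0^{-1}A=f_1^{-1}A=A\pmod\mu$ (the paper via the decomposition $1_A=1_{f_0^{-1}A\cap f_1^{-1}A}+\tfrac12 1_{f_0^{-1}A\triangle f_1^{-1}A}$, you via the pointwise observation on $\{1_A\circ f_0\ne 1_A\circ f_1\}$, which amounts to the same thing), and both then apply annealed metric transitivity to the pair $(A,A^c)$ to get a contradiction. Your remarks on $(f_j)_*\mu\ll\mu$ and on reading ``$\mu(A)\mu(B)>0$'' as ``both positive'' for infinite $\mu$ are correct and fill in small points the paper leaves implicit.
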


\begin{proof}
Let $A\in\mathcal{B}$ be a $T$-invariant set.
That is, $A$ satisfies $1_A=\mathbb{E}[1_{T^{-1}A}] =\frac{1}{2}(1_{f_0^{-1}A}+1_{f_1^{-1}A})$.
Then $1_A=1_{f_0^{-1}A\cap f_1^{-1}A}+\frac{1}{2}1_{f_0^{-1}A\triangle f_1^{-1}A}$ and this implies $f_0^{-1}A=f_1^{-1}A=A\pmod{\mu}$.
From the annealed metrical transitivity of $(T,\mu)$, for any sets of $\mu$-positive measure, say $B,C$, we can find $n\ge1$ such that $\mathbb{E}[\mu(T^{(-n)}B\cap C)]>0$.
But if we take $B=A$ and $C=A^c$ above where $A$ is a $T$-invariant set, then we have $\mathbb{E}[\mu(T^{(-n)}B\cap C)]=\mu(A\cap A^c)=0$ and hence either $A=\emptyset$ or $[0,1]\pmod{\mu}$.
\end{proof}

\begin{lemma}\label{lem-3}
$(T,\mu)$ is ergodic if and only if $(\widetilde{T},\widetilde{\mu})$ is ergodic.
\end{lemma}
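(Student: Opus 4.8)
The claim is the equivalence of ergodicity for the random dynamical system $(T,\mu)$ and for its skew-product $(\widetilde T,\widetilde\mu)$ with $\widetilde\mu=\mathbb P\otimes\mu$. The plan is to unwind both notions of ergodicity into statements about invariant sets and translate between invariant sets of $T$ on $[0,1]$ and invariant sets of $\widetilde T$ on $[0,1]^\sim=\Omega\times[0,1]$. Recall that $A\subset[0,1]$ is $T$-invariant when $\mathbb E[1_A\circ T]=1_A$, i.e. $\frac12(1_{f_0^{-1}A}+1_{f_1^{-1}A})=1_A$, which (as in the proof of Lemma \ref{lem-2}) forces $f_0^{-1}A=f_1^{-1}A=A\pmod\mu$; and $\widetilde A\subset[0,1]^\sim$ is $\widetilde T$-invariant when $\widetilde T^{-1}\widetilde A=\widetilde A\pmod{\widetilde\mu}$.

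**Direction ($\Leftarrow$).** Suppose $(\widetilde T,\widetilde\mu)$ is ergodic and let $A$ be a $T$-invariant set. Then $\widetilde A=\Omega\times A$ satisfies $\widetilde T^{-1}\widetilde A=\{(\omega,x):f_{\omega_1}(x)\in A\}=\Omega\times(f_0^{-1}A\cap f_1^{-1}A)\cup(\text{off-diagonal part})$; using $f_0^{-1}A=f_1^{-1}A=A\pmod\mu$ we get $\widetilde T^{-1}\widetilde A=\widetilde A\pmod{\widetilde\mu}$. Ergodicity of $\widetilde\mu$ gives $\widetilde\mu(\widetilde A)=0$ or $\widetilde\mu(\widetilde A^c)=0$, and since $\widetilde\mu=\mathbb P\otimes\mu$ with $\mathbb P$ a probability measure this reads $\mu(A)=0$ or $\mu(A^c)=0$, so $(T,\mu)$ is ergodic.

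**Direction ($\Rightarrow$).** Suppose $(T,\mu)$ is ergodic and let $\widetilde A\subset[0,1]^\sim$ be $\widetilde T$-invariant. The idea is the standard one for skew products over a Bernoulli (exact, or at least ergodic and "forward-mixing enough") base: the fibre sections $\widetilde A_\omega=\{x:(\omega,x)\in\widetilde A\}$ of a $\widetilde T$-invariant set can be compared along the shift, and invariance plus the i.i.d. structure of $\mathbb P$ forces $\widetilde A_\omega$ to be ($\mathbb P$-a.s.) a single set $A\subset[0,1]$ independent of $\omega$ up to $\mu$-null sets. Concretely, $\widetilde T^{-1}\widetilde A=\widetilde A$ means $f_{\omega_1}^{-1}\widetilde A_{\theta\omega}=\widetilde A_\omega$ for $\mathbb P$-a.e. $\omega$; iterating, $\widetilde A_\omega$ is measurable with respect to $\bigcap_n\sigma(\omega_{n+1},\omega_{n+2},\dots)$ up to the $\mu$-completion, and by Kolmogorov's zero–one law this tail $\sigma$-field is $\mathbb P$-trivial, so there is a fixed $A$ with $\widetilde A_\omega=A\pmod\mu$ for $\mathbb P$-a.e. $\omega$, i.e. $\widetilde A=\Omega\times A\pmod{\widetilde\mu}$. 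Then $f_{\omega_1}^{-1}A=A\pmod\mu$ for $\mathbb P$-a.e. $\omega_1\in\{0,1\}$, hence $f_0^{-1}A=f_1^{-1}A=A\pmod\mu$, so $1_A=\frac12(1_{f_0^{-1}A}+1_{f_1^{-1}A})=\mathbb E[1_A\circ T]$ and $A$ is $T$-invariant. Ergodicity of $(T,\mu)$ gives $\mu(A)=0$ or $\mu(A^c)=0$, hence $\widetilde\mu(\widetilde A)=0$ or $\widetilde\mu(\widetilde A^c)=0$.

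**Main obstacle.** The routine part is the ($\Leftarrow$) direction; the substantive point is the ($\Rightarrow$) direction, specifically justifying that a $\widetilde T$-invariant set must be a cylinder $\Omega\times A$ modulo $\widetilde\mu$. One must handle the $\mu$-null-set bookkeeping carefully (the equalities $f_{\omega_1}^{-1}\widetilde A_{\theta\omega}=\widetilde A_\omega$ hold only $\pmod\mu$ and only for $\mathbb P$-a.e. $\omega$) and either invoke a clean measurability/tail-triviality argument as above or, alternatively, appeal directly to a known correspondence between ergodicity of i.i.d. random dynamical systems and ergodicity of their skew products (e.g. as in Kifer \cite{Kifer86}); I would cite \cite{Kifer86} for this standard fact and present the tail-$\sigma$-field argument as the short self-contained justification.
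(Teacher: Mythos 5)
The paper's own proof of Lemma~\ref{lem-3} is a one-line citation to Theorem~2.1 of Chapter~1 of Kifer \cite{Kifer86}, which you also offer as a fallback, so on that count you and the paper agree. The problem is with the ``short self-contained justification'' you propose for the substantive direction ($\Rightarrow$).

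Your ($\Leftarrow$) direction is fine (modulo the slightly muddled ``off-diagonal part'' phrasing: cleaner is $\widetilde T^{-1}(\Omega\times A)=(\{\omega_1=0\}\times f_0^{-1}A)\cup(\{\omega_1=1\}\times f_1^{-1}A)=\Omega\times A\pmod{\widetilde\mu}$). For ($\Rightarrow$), however, the tail-triviality step has a genuine gap. From $\widetilde T$-invariance you correctly obtain, for $\mathbb P$-a.e.\ $\omega$ and every $n$,
\[
\widetilde A_\omega=\bigl(f_\omega^{(n)}\bigr)^{-1}\widetilde A_{\theta^n\omega}\pmod\mu.
\]
But the right-hand side depends on $\omega_1,\dots,\omega_n$ through the composition $f_\omega^{(n)}=f_{\omega_n}\circ\cdots\circ f_{\omega_1}$, not only on $\theta^n\omega$. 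Hence the map $\omega\mapsto[\widetilde A_\omega]$ (into the measure algebra of $([0,1],\mu)$) is \emph{not} measurable with respect to $\bigcap_n\sigma(\omega_{n+1},\omega_{n+2},\dots)$, and Kolmogorov's zero--one law does not apply to it. The claim ``$\widetilde A_\omega$ is tail-measurable'' would be correct only if $f_0=f_1$, i.e.\ in the deterministic case; in the genuinely random case the fibres $\widetilde A_\omega$ can, a priori, depend on every coordinate of $\omega$.

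The standard repair is a martingale argument rather than a zero--one law. Set $\phi(x)\coloneqq\int_\Omega 1_{\widetilde A}(\omega,x)\,d\mathbb P(\omega)$. Invariance of $\widetilde A$ together with independence of $\omega_1$ and $\theta\omega$ gives
\[
\phi(x)=\int_\Omega 1_{\widetilde A}\bigl(\theta\omega,f_{\omega_1}x\bigr)\,d\mathbb P(\omega)=\tfrac12\bigl(\phi(f_0x)+\phi(f_1x)\bigr),
\]
so $\phi$ is harmonic for the annealed transfer operator; ergodicity (plus conservativity, which is available in the paper's application of the lemma) forces $\phi$ to be $\mu$-a.e.\ constant, say $\phi\equiv c$. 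On the other hand, with $\mathcal F_n=\sigma(\omega_1,\dots,\omega_n)$ one computes
\[
\mathbb E\bigl[1_{\widetilde A}\mid\mathcal F_n\otimes\mathcal B\bigr](\omega,x)=\phi\bigl(f_\omega^{(n)}x\bigr),
\]
which equals $c$ for $\widetilde\mu$-a.e.\ $(\omega,x)$ by $\widetilde T$-invariance of $\widetilde\mu$. Letting $n\to\infty$ and using L\'evy's upward martingale convergence (conditioning in the $\omega$-direction, where $\mathbb P$ is a probability, so this is legitimate despite $\mu$ being infinite) yields $1_{\widetilde A}=c$ a.e., hence $c\in\{0,1\}$ and $\widetilde\mu(\widetilde A)=0$ or $\widetilde\mu(\widetilde A^c)=0$. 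Either present this martingale argument or, as the paper does, simply cite \cite{Kifer86}; the tail-$\sigma$-field route as written does not close.
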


\begin{proof}
See Theorem 2.1 in \cite[Chapter 1]{Kifer86}.
\end{proof}

Under the help of Lemmas \ref{lem-2} and \ref{lem-3}, we get the following assertion.

\begin{theorem}\label{thm: ce}
If $(T_{\mathrm{core}},\nu)$ is annealed metrically transitive, then $(\widetilde T,\widetilde{\mu})$ is conservative and ergodic.
\end{theorem}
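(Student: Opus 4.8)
The plan is to reduce the conservativity and ergodicity of $(\widetilde T,\widetilde\mu)$ to the annealed metrical transitivity of $(T_{\mathrm{core}},\nu)$ by a two-step transfer: first from the core dynamics to the full random dynamics $(T,\mu)$, and then from $(T,\mu)$ to the skew product, using the lemmas already established. For ergodicity, Lemma~\ref{lem-3} says $(\widetilde T,\widetilde\mu)$ is ergodic iff $(T,\mu)$ is, and Lemma~\ref{lem-2} says annealed metrical transitivity of $(T,\mu)$ implies its ergodicity; so the first task is to show that annealed metrical transitivity of $(T_{\mathrm{core}},\nu)$ passes to $(T,\mu)$. The key observation is that the partition $\zeta=\{I_n^-,I_0,I_n^+:n\in\mathbb N\}$ is ``Markov'' for $T$ in the weak sense recorded in the proof of Lemma~\ref{lem2-3}: on $I_n^\pm$ with $n\ge 2$ the two branches $f_0,f_1$ are mutually inverse and shift the index by $\pm 1$, while on $I_1^-\cup I_0\cup I_1^+=Y$ the random map $T$ restricted to the appropriate pieces realizes $T_{\mathrm{core}}$ up to the excursions into $\bigcup_{n\ge 2}(I_n^-\cup I_n^+)$. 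Concretely, by \eqref{inv2-2} we have $\mu(I_n^\pm)=2\nu(I_1^\pm)$ for all $n$, and $\mu$ restricted to $Y$ is exactly $\nu$; moreover, starting from $Y$, the random orbit of $T$ returns to $Y$ almost surely (each excursion into $I_n^-$ resp.\ $I_n^+$ is a simple random walk on the index $n$, which is recurrent), and the induced random map of $T$ on $Y$ is precisely $T_{\mathrm{core}}$.

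With this in hand, I would argue ergodicity of $(T,\mu)$ directly. Let $A\subset[0,1]$ be $T$-invariant, i.e.\ $\mathbf 1_A=\mathbb E[\mathbf 1_{T^{-1}A}]$; as in the proof of Lemma~\ref{lem-2} this forces $f_0^{-1}A=f_1^{-1}A=A\pmod\mu$. Then $A\cap Y$ is invariant under both $h_0^{-1}$ and $h_1^{-1}$ modulo $\nu$ (because the induced map of $T$ on $Y$ is $T_{\mathrm{core}}$, and $\mu|_Y=\nu$), hence $A\cap Y$ is a $T_{\mathrm{core}}$-invariant set; annealed metrical transitivity of $(T_{\mathrm{core}},\nu)$ together with the argument of Lemma~\ref{lem-2} (applied to $T_{\mathrm{core}}$ in place of $T$) gives $\nu(A\cap Y)\in\{0,\nu(Y)\}$. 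Finally, the relation $f_0^{-1}A=f_1^{-1}A=A\pmod\mu$ on the ``tails'' $I_n^\pm$ propagates the value of $\mathbf 1_A$ from $Y$ to all of $[0,1]$: since $f_0$ maps $I_n^-$ onto $I_{n-1}^-$ bijectively (for $n\ge 2$) and $f_1=f_0^{-1}$ there, invariance forces $\mathbf 1_A$ to be constant $\mu$-a.e.\ along each tail, equal to its value on $I_1^-$ resp.\ $I_1^+$. Hence $A=\emptyset$ or $[0,1]\pmod\mu$, so $(T,\mu)$ is ergodic, and Lemma~\ref{lem-3} upgrades this to ergodicity of $(\widetilde T,\widetilde\mu)$.

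For conservativity of $(\widetilde T,\widetilde\mu)$: since $\widetilde\mu$ is $\widetilde T$-invariant and $\sigma$-finite, by Hopf's decomposition it suffices to exhibit one set of positive finite $\widetilde\mu$-measure that is recurrent, or to invoke the standard fact that an ergodic measure-preserving transformation of a $\sigma$-finite measure space with no ``wandering set of positive measure on which it is dissipative'' is conservative; concretely I would show that $\widetilde Y:=\Omega\times Y$ is a sweep-out set (its orbit covers $[0,1]^\sim$ mod $\widetilde\mu$, which follows from the random-walk recurrence on the tails) and that the first return map of $\widetilde T$ to $\widetilde Y$ is the skew product over $T_{\mathrm{core}}$, which preserves the finite measure $\mathbb P\otimes\nu$; a measure-preserving transformation of a finite measure space is conservative, and conservativity of the induced map on a sweep-out set implies conservativity of the original (Aaronson, \emph{An Introduction to Infinite Ergodic Theory}). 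Combined with the ergodicity just proved, this yields the claim.

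The main obstacle I anticipate is making precise and rigorous the identification of the induced random dynamics of $T$ on $Y$ with $T_{\mathrm{core}}$, including the almost-sure finiteness of return times to $Y$ — this is where the recurrence of the simple random walk governing the excursions in $\bigcup_{n\ge 2}(I_n^-\cup I_n^+)$ enters, and it must be checked that this recurrence is compatible with the measure $\mu$ (which has $\mu(I_n^\pm)=2\nu(I_1^\pm)$ constant in $n$, so $\mu$ of a whole tail is infinite precisely when $\nu(I_1^\pm)>0$, matching Lemma~\ref{lem2-3}). A secondary technical point is handling the ``$\pmod\mu$'' null sets carefully when propagating invariance along the tails, since $\nu$ (hence $\mu$) may be singular; but the branch structure on $I_n^\pm$ for $n\ge 2$ is deterministic and bijective, so no genuine difficulty arises there.
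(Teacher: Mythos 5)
Your proposal is correct in substance and, for the ergodicity half, takes a genuinely different and more economical route than the paper. The paper proves the \emph{annealed metrical transitivity} of $(T,\mu)$ --- a strictly stronger property --- by a careful word construction: given Borel sets $A,B$ of positive $\mu$-measure, it manufactures specific words $\omega,\omega',\omega'',\omega''',\hat\omega'$ to pull $A$ and $B$ into $Y$, apply the annealed metrical transitivity of $(T_{\mathrm{core}},\nu)$, and push back out, treating the case $\mu(B\cap I_0)=\mu(B)$ separately. You instead prove ergodicity of $(T,\mu)$ directly: the rigidity $f_0^{-1}A=f_1^{-1}A=A\ (\bmod\ \mu)$ for a $T$-invariant set $A$ (established in the proof of Lemma~\ref{lem-2}) passes to $h_j^{-1}(A\cap Y)=A\cap Y\ (\bmod\ \nu)$ since $h_j$ agrees with some $f_i$ on each piece of $Y$ and maps $Y$ into $Y$, so $A\cap Y$ is $T_{\mathrm{core}}$-invariant; ergodicity of $(T_{\mathrm{core}},\nu)$ (Lemma~\ref{lem-2} applied to $T_{\mathrm{core}}$) gives $\nu(A\cap Y)\in\{0,1\}$, and the identity $\mu(A\cap I_n^\pm)=2\nu_\pm(A\cap I_1^\pm)$, valid because $f_0^{-(n-1)}A=A$ and $\mu|_{I_n^\pm}=2(f_{\mp})_*^{\,n-1}\nu_\pm$, propagates the dichotomy to all of $[0,1]$. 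This is shorter and avoids the combinatorics. What the paper's stronger statement buys is a lift of the full transitivity structure, but for the purposes of this theorem ergodicity suffices and your argument is cleaner. For conservativity you and the paper take the same route (Maharam's recurrence theorem with the sweep-out set $\Omega\times Y$, Lemma~\ref{lem-1} supplying the almost-sure return).

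Two imprecisions are worth flagging, though neither invalidates the argument you actually run. First, $\mu|_Y\neq\nu$: by \eqref{inv2-2}, $\mu|_Y=2\nu_-+\nu_0+2\nu_+$, so $\nu\le\mu|_Y\le 2\nu$; they are merely equivalent, which is what you in fact use when passing between $\bmod\ \mu$ and $\bmod\ \nu$. Second, the assertion that ``the induced random map of $T$ on $Y$ is precisely $T_{\mathrm{core}}$'' is not literally true: from $x\in I_1^-$, choosing $f_0$ sends $x$ on an excursion through $\bigcup_{n\ge 2}I_n^-$ that (by the random-walk recurrence) returns to the \emph{same} point $x$, so the first-return map on $I_1^\pm$ is a lazy version of $T_{\mathrm{core}}$, not $T_{\mathrm{core}}$ itself; the two have the same invariant sets and measures, but that requires a word of justification. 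Since your ergodicity proof works directly with the invariance equation rather than the induced map, this aside does no harm there; however, in the alternative conservativity argument the induced skew product preserves $\mathbb P\otimes\mu|_Y$ (the standard fact for induced transformations), not $\mathbb P\otimes\nu$, and the correct statement should be used. Also a small typo: $f_0$ maps $I_n^-$ onto $I_{n+1}^-$ (deeper), and $f_1$ onto $I_{n-1}^-$ for $n\ge2$; you reversed the direction, though the bijective commuting structure you invoke is unaffected.
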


We prepare a   lemma before proving Theorem \ref{thm: ce}.

\begin{lemma}\label{lem-1}
Let $n\ge 2$.
Then for $\mathbb{P}$-almost all $\omega=(\omega_1,\omega_2,\dots)\in\Omega$, there exists $N_-\in\mathbb{N}$ (resp. $N_+\in\mathbb{N}$) such that for any $x\in I_n^{-}$ (resp.\ $I_n^+$), we have
\begin{align*}
f_{\omega}^{(N_-)}(x)
\in I_1^{-}\text{ (resp.\ $f_{\omega}^{(N_+)}(x) \in I_1^+$)}.
\end{align*}
\end{lemma}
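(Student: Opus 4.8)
The plan is to exploit the concrete structure of the two boundary branches of $f_0$ and $f_1$: on the left end, $f_0(x)=x/2$ contracts toward $0$ while $f_1(x)=2x$ expands away from it, and crucially these two maps are inverse to each other on each $I_n^-$ with $n\geq 2$ (as already observed in the proof of Lemma \ref{lem2-3}), with $f_1$ sending $I_n^-$ onto $I_{n-1}^-$. So for a fixed $x\in I_n^-$, applying the word $f_\omega^{(k)}$ moves $x$ between the levels $I_m^-$ according to a simple rule: reading $\omega_1,\omega_2,\dots$, each symbol $1$ decreases the level by one (until level $1$ is reached, where the dynamics leaves the ``pure tail'' regime and enters the core piece $g_1$), and each symbol $0$ increases the level by one. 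First I would make this precise by introducing, for a fixed starting level $n$, the random walk $L_0=n$, $L_{k}=L_{k-1}-1$ if $\omega_k=1$ and $L_k=L_{k-1}+1$ if $\omega_k=0$, reflecting the fact that as long as $L_{k-1}\geq 2$ the point stays in the pure-tail region where $f_0,f_1$ act as the maps $x\mapsto x/2$, $x\mapsto 2x$; I would track $x$ only up to (and including) the first time $N_-$ that $L_{N_-}=1$, i.e.\ the first time the trajectory of $x$ lands in $I_1^-$.

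The key point is then a hitting-time statement for this simple symmetric random walk on $\{1,2,3,\dots\}$ started at $n$: with probability one it reaches the level $1$ in finitely many steps, because the walk increments by $\pm 1$ with equal probability $\tfrac12$ and such a walk is recurrent, hence hits $1$ almost surely in finite time. That finite hitting time is exactly the desired $N_-=N_-(\omega,n)$, and since it depends only on $\omega$ and $n$ (not on $x$), the \emph{same} $N_-$ works for every $x\in I_n^-$. I would spell out the monotone bookkeeping: as long as $1\le k\le N_-$ and $L_{k-1}\ge 2$, we have $f_\omega^{(k)}(x)\in I_{L_k}^-$; at step $N_-$ we land in $I_1^-$. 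One subtlety to address cleanly: if along the way the walk were to dip to level $1$ at some intermediate step, that already \emph{is} the first visit to $I_1^-$, so we simply stop there; the definition of $N_-$ as the first hitting time of $1$ takes care of this automatically. The statement for $I_n^+$ is symmetric, with the roles of $f_0$ (now $x\mapsto(x+1)/2$, expanding from $1$) and $f_1$ (now $x\mapsto 2x-1$, contracting toward $1$) interchanged; $f_0$ maps $I_n^+$ onto $I_{n-1}^+$, so reading symbol $0$ decreases the level and symbol $1$ increases it, and the same random-walk recurrence gives $N_+$.

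The one genuine obstacle, rather than a routine calculation, is making sure the ``pure tail'' description is valid for the entire excursion, i.e.\ that the point never escapes the set $\bigcup_{m\ge1}I_m^-$ before time $N_-$. This is where the specific piecewise definition \eqref{eq:1103} is used: on $[0,c)=\bigcup_{m\ge1}I_m^-$ one has $f_0(x)=x/2\in[0,c)$ and $f_1(x)=2x$, which maps $I_m^-$ into $[0,c)$ precisely when $m\ge2$ (it maps $I_1^-=[c/2,c)$ onto $[c,2c)\subset I_0$, leaving the tail — but that only happens at level $1$, which is exactly our stopping level). So as long as the walk $L_k$ stays $\ge 2$ we remain in the tail, and the first time it reaches $1$ is the first and only relevant exit; there is no way to jump directly from a tail level $m\ge2$ to $I_0$ or beyond. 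I would state this containment explicitly as the first line of the proof and then let the random-walk recurrence finish the argument; the union $[c,1)$ over the right piece together with $f_0(I_n^+)=I_{n-1}^+$ is handled identically.
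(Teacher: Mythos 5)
Your argument matches the paper's exactly: both identify the dynamics on $\bigcup_{m\ge2}I_m^\pm$ with a simple symmetric random walk on the integers, invoke recurrence to get an almost-sure finite first hitting time of level $1$, and note that this time depends only on $\omega$ and $n$, hence is uniform over $x\in I_n^\pm$. One small slip in your bookkeeping parenthetical: $f_1(x)=2x$ only on $[0,c/2)=\bigcup_{m\ge2}I_m^-$, while on $I_1^-=[c/2,c)$ one has $f_1=g_1$, so $f_1(I_1^-)=g_1(I_1^-)\subset[c,1-c]$ rather than $[c,2c)$ (and $[c,2c)\subset I_0$ would anyway fail for $c>1/3$); this is immaterial to the proof since the walk is only tracked while the level is $\ge2$ and you stop at the first visit to level $1$.
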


\begin{proof}
We fix $n\ge2$.
Note that $\mathbb{P}(T=f_0)=\mathbb{P}(T=f_1)=\frac{1}{2}$ and $f_0\circ f_1(x)=f_1\circ f_0(x)=x$ for any $x\in I_n^{\pm}$ where $n\ge2$.
Then setting for $\omega=(\omega_1,\omega_2,\dots)\in\Omega$ and $k\ge1$
\begin{align*}
\eta_k(\omega)=
\begin{cases}
-1 & (\omega_k=0)\\
1 & (\omega_k=1)
\end{cases}
\end{align*}
and
\begin{align}\label{eq:rw}
W_k^{-n}(\omega)=-n+\sum_{i=1}^{k}\eta_i(\omega),
\end{align}
we identify our random dynamics $(T,\mu)$ with a simple random walk on $\mathbb{Z}\setminus\{-1,0,1\}$ outside $Y=I_1^-\cup I_0\cup I_1^+$.
Let $\Omega_{-n}=\{\omega\in\Omega:\varphi_{-1}(\omega)<\infty\}$ where $\varphi_{-1}(\omega)=\inf\{j\ge1:W_j^{-n}(\omega)=-1\}$ stands for the first hitting time of $-1$ for $(W_k^{-n})_{k}$.
Then from a standard argument for one-dimensional random walk which is recurrent and irreducible, we have $\mathbb{P}(\Omega_{-n})=1$, i.e., for $\mathbb{P}$-almost every $\omega\in\Omega$, there is $N=N(\omega)\in\mathbb{N}$ such that $f_{\omega}^{(N)}(I_n^-)=I_1^-$.
By the symmetry, we conclude the result is true for the positive side $\cup_{n\ge2}I_n^+$ and complete the proof.
\end{proof}

\begin{proof}[Proof of Theorem \ref{thm: ce}]
We first show that $(\widetilde{T},\widetilde{\mu})$ is conservative.
To show this, by Maharam's recurrence theorem (Theorem 1.1.7 in \cite{Aaronson1997}), it is enough to see the existence of a set $M^{\sim}\in\mathcal{B}([0,1]^{\sim})$ such that $\widetilde{\mu}(M^{\sim})<\infty$ and $\bigcup_{n=0}^{\infty}\widetilde{T}^{-n}M^{\sim}=[0,1]^{\sim}\pmod{\widetilde{\mu}}$.
For this, we set $M^{\sim}\coloneqq\Omega\times Y$ where $Y=I_1^{-1}\cup I_0\cup I_1^+$.
Then we have $\widetilde{\mu}(M^{\sim})<\infty$ by construction of $\mu$ in (\ref{inv2-2}).
It also holds from Lemma \ref{lem-1} that $\bigcup_{n=0}^{\infty}\widetilde{T}^{-n}M^{\sim}=[0,1]^{\sim}\pmod{\widetilde{\mu}}$.

For showing the ergodicity of $(\widetilde{T},\widetilde{\mu})$, we need to prove the annealed metrical transitivity of $(T,\mu)$ from Lemma \ref{lem-2} and Lemma \ref{lem-3}.
We take arbitrary Borel sets $A$ and $B$ with $\mu(A)\mu(B)>0$.
We first consider the case when $\mu(B\cap I_0) <\mu(B)$, that is, $\mu(B\cap (\bigcup _{n\geq 1} I_n^-\cup I_n^+)) >0$. 
If $\mu(B\cap I_{n_2}^-) >0$ (resp., $\mu(B\cap I_{n_2}^+) >0$) for some $n_2\geq 1$, then
we let  $B'=B\cap I_{n_2}^-$ and $\omega '=(0\cdots 0) $  (resp., $B'=B\cap I_{n_2}^+$ and $\omega '=(1\cdots 1) $). 
Define $A' \subset A$, an integer $n_1\ge 1$ and $\omega \in \{0,1\}^{n_1}$ in a similar manner, except the case $\mu (A\cap I_0) =\mu (A)$ in which we set $A'=A$ and $n_1=0$.
Then, observing 
that $f_{(0\cdots 0)}^{(-j)} (I_{n}^-) = f_1^{j}(I_{n}^-) = I_{n-j}^-$   and $f_{(1\cdots 1)}^{(-j)} (I_{n}^+) =  f_0^{j}(I_{n}^+)=I_{n-j}^+$ for each $j=0,\ldots ,n-1$ and $n\geq 1$ (recall that $f_0 \circ f_1(x) =f_1\circ f_0(x) =x$ on $Y^c$) and that $f_0(I_1^+),f_1(I_1^-)\subset Y$, we get 
\begin{equation}\label{eq:0218b}
\mu\left(f^{(-n_1)}_{\omega}A' \cap Y\right) >0, \quad  f^{(-(n_2-1))}_{\omega'} B' \subset I_1^- \cup I_1^+  
\end{equation}
and
\begin{equation}\label{eq:0218c}
 f^{(-j)}_{\omega'} B' \cap Y=\emptyset \quad \text{for each $j=0,\ldots ,n_2-2$}.
\end{equation}
Set $A''\coloneqq f_{\omega}^{(-n_1)}A'$ and $B''\coloneqq f_{\omega'}^{(-(n_2-1))}B'$.  
Then, since $\nu (E) \geq \frac{\mu (E)}{2}$ for each $E\in \mathcal B(Y)$ and \eqref{eq:0218b}, $\nu(A'')\nu(B'')>0$.
Therefore, by the assumption that $(T_{\mathrm{core}},\nu)$ is annealed metrically transitive,  
we can find $n_3\ge 0$ such that
\begin{equation}\label{eq:0218e}
\mathbb{E}\left[\nu\left(T_{\mathrm{core}}^{(-n_3)}A''\cap B''\right)\right]
= \frac{1}{2^{n_3}} \sum _{\eta \in\{0,1\}^{n_3}}\nu\left(h_{\eta }^{(-n_3)}A''\cap B''\right) >0.
\end{equation}
In particular, there exists $\omega ''\in\{0,1\}^{n_3}$ satisfying
$\nu( 
h_{\omega ''}^{(-n_3)}A''\cap B'')>0$.
It follows from $\nu(E)\le\mu(E)$ for each $E\in\mathcal{B}(Y)$ that $\mu(h_{\omega ''}^{(-n_3)}A''\cap B'')>0$.
Recall that $h_0=h_1=f_1$ on $I_1^-$, $h_0=f_0$, $h_1=f_1$ on $I_0$, $h_0=h_1=f_0$ on $I_1^+$.
Hence, we have
$\mu(f_{\omega'''}^{(-n_3)}A''\cap B'')>0$ for some $\omega'''\in\{0,1\}^{n_3}$. 
Denote $(\omega _{n_2-1}'+1 \pmod 1, \ldots, \omega_1'+1 \pmod 1)$ 
by $\widehat\omega'$, then we have $ f_{\widehat \omega'}^{(-(n_2-1))} (C) = f_{\omega'}^{(n_2-1)} (C)$ for any $C\subset I_1^- \cup I_1^+$  
 due to \eqref{eq:0218b} and \eqref{eq:0218c}.
In particular, $ f_{\widehat \omega'}^{(-(n_2-1))} \circ f^{(-(n_2-1))}_{\omega'}(B')= B'$, and thus 
we get 
\begin{align*}
\mu\left( f_{\widehat \omega ' \omega ''' \omega}^{(-(n_2-1+n_3+n_1))} A\cap B \right)
\ge&\mu\left( f_{\widehat \omega '}^{(-(n_2-1))}  \left( f_{\omega ''' \omega}^{(-(n_3+n_1))} A'\cap  f_{\omega '}^{(-(n_2-1))} B'\right) \right)\\ 
=&\mu\left( f_{\omega '}^{(n_2-1)}  \left( f_{\omega ''' }^{(- n_3)} A''\cap  B''\right) \right).
\end{align*}
On the other hand, for $D\coloneqq f_{\omega ''' }^{(- n_3)} A''\cap  B''$,  it follows from \eqref{eq:0218} that
\begin{align*}
\mu\left( f_{\omega '}^{(n_2-1)}  D \right) &=
\frac{1}{2^{n_2-1}}\sum _{\eta\in\{0,1\}^{n_2-1}} \mu\left( f_{\eta}^{(-(n_2-1))} \circ f_{\omega '}^{(n_2-1)}   D \right)\\
&\geq 
\frac{1}{2^{n_2-1}} \mu\left( f_{\omega '}^{(-(n_2-1))} \circ f_{\omega '}^{(n_2-1)} D \right)\\ 
&=\frac{1}{2^{n_2-1}}\mu\left(  f_{\omega ''' }^{(- n_3)} A''\cap  B'' \right)
>0. \color{black}
\end{align*}
Therefore, with $N\coloneqq n_2-1+n_3+n_1$,  we have
\begin{align}\label{eq:0218f}
\mathbb{E}\left[\mu\left(T^{(-N)} A \cap B\right)\right] &= \frac{1}{2^{N}} \sum _{\eta \in \{0,1\} ^{N}} \mu\left( f_{\eta}^{(-N)} A\cap B \right)\\\notag
&\ge \mu\left( f_{\widehat \omega ' \omega ''' \omega}^{(- N)} A\cap B \right)>0.
\end{align}
This shows the desired result for the first case.
In particular, if we take $B=I_1^-$ above, then we have for any $A\in\mathcal{B}([0,1])$ with $\mu(A)>0$,
\begin{align}\label{eq:219}
\mu\left( f_{\omega}^{(-N)} A\cap I_1^- \right)
>0
\end{align}
for some $\omega\in\Omega$ and $N\in\mathbb{N}$.

We second consider the case when $\mu (B\cap I_0)=\mu(I_0)$, implying $\nu(B) \geq \nu (B\cap I_0)=\mu(I_0)>0$.
From \eqref{eq:219}, we get
\[
\nu\left( f_{\omega}^{(- N)} A \right) \geq \frac{1}{2}\mu\left( f_{\omega}^{(- N)} A\cap I_0^-\right)>0
\]
for some $\omega\in\Omega$ and $N\in\mathbb{N}$.
Hence, 
 it follows from the annealed metrical transitivity of $(T_{\mathrm{core}},\nu)$ that 
  there exists an integer $m\geq 0$ and $\widetilde\omega $ such that $\nu (f^{(-m)}_{\widetilde\omega } \circ f_{\omega}^{(- N)} A \cap B)>0$.
Since $\nu \leq \mu$ on $Y$, this implies  
 $\mu (f_{\widetilde\omega \omega}^{(- (m+N))} A \cap B)>0$,
which concludes  the annealed metrical transitivity of $(T,\mu)$ by repeating the argument in \eqref{eq:0218f}.
\end{proof}

\subsection{The end of the proof of Theorem \ref{thm:main}}
Now we   complete the proof of Theorem \ref{thm:main} by checking the assumptions (C1)--(C4) in Theorem \ref{thm:t} with respect to 
\begin{equation}\label{eq:0401ea}
\tau = \widetilde T, \; X=[0,1]^{\sim}, \; m=\widetilde{\mu}, \; A=Y^\sim, \; A^{\pm}=\Big(\bigcup_{k=2}^{\infty}I_k^{\pm}\Big)^\sim
\end{equation}
(recall that $B^\sim=\Omega\times B$ for each $B\in \mathcal B([0,1])$). 
We set 
$(\widetilde{Y})_n=(Y^\sim)^c\cap\{\varphi_{Y^\sim}=n\}$, 
where $\varphi_{Y^\sim}(\omega,x)=\inf\{n\ge1 : \widetilde{T}^n(\omega,x)\in Y^\sim\}$, so that
\begin{align*}
(\widetilde{Y})_n&=\bigcup_{k=1}^{\infty}\left(\Omega_{n,k}^-\times I_{k+1}^-\right)
\cup\bigcup_{k=1}^{\infty}\left(\Omega_{n,k}^+\times I_{k+1}^+\right)
\end{align*}
where
\begin{align*}
\Omega_{n,k}^-&\coloneqq\bigg\{\{\omega_1,\omega_2,\dots\}\in\Omega : \\
&\quad\qquad\#\{1\le i<l : \omega_i=1\}-\#\{1\le i<l : \omega_i=0\}<k\text{ for }l<n,\\
&\quad\qquad\qquad\qquad\#\{1\le i\le n : \omega_i=1\}-\#\{1\le i\le n : \omega_i=0\}=k
\bigg\}
\end{align*}
and
\begin{align*}
\Omega_{n,k}^+&\coloneqq\bigg\{\{\omega_1,\omega_2,\dots\}\in\Omega : \\
&\quad\qquad\#\{1\le i<l : \omega_i=0\}-\#\{1\le i<l : \omega_i=1\}<k\text{ for }l<n,\\
&\quad\qquad\qquad\qquad\#\{1\le i\le n : \omega_i=0\}-\#\{1\le i\le n : \omega_i=1\}=k
\bigg\}.
\end{align*}
Each $\omega\in\Omega_{n,k}^{\pm}$ depends only on the first $n$-coordinates so that we can write $\Omega_{n,k}^{\pm}=\Omega_{n,k}^{\pm}\mid_n\times\Omega$ for some $\Omega_{n,k}^{\pm}\mid_n\subset\{0,1\}^n$.

In what follows, $(W_j^{k})_j=(W_j^{k}(\omega))_j$ denotes the random walk which starts from $k\in\mathbb{Z}\setminus\{-1,0,1\}$ given in \eqref{eq:rw} and for $l\in\mathbb{Z}$ with $lk>0$, $\varphi_{l}^k=\varphi_{l}^k(\omega)=\inf\{j\ge1:W_j^k=l\}$ denotes the first hitting time of $l$ for the random walk $(W_j^k)_j$.
Then we have for $k\ge1$
\begin{align}\label{eq2-4}
\mathbb{P}\left(\varphi_{1}^{k+1}=n\right)=\mathbb{P}\left(\varphi_{-1}^{-(k+1)}=n\right)
=\mathbb{P}\left(\Omega_{n,k}^{\pm}\right)=\frac{\#\left(\Omega_{n,k}^{\pm}\mid_n\right)}{2^n}.
\end{align}
For these notation, we have the following lemma.

\begin{lemma}\label{lem2-6}
For each $n\ge1$,
 it holds that
\begin{align*}
\mathcal{L}_{\widetilde{T}}^{n} 1_{(\widetilde{Y})_{n} \cap A^-}(\omega, x) &=c_{n} 1_{I_1^-}(x) \\
\mathcal{L}_{\widetilde{T}}^{n} 1_{(\widetilde{Y})_{n}}(\omega, x) &=c_{n} 1_{I_1^-\cup I_1^+}(x)
\end{align*}
where
\begin{align*}
c_{n}=\sum_{k = 1}^{\infty} \mathbb{P}\left(\varphi_{1}^{k+1}=n\right).
\end{align*}
\end{lemma}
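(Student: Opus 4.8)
The plan is to compute the two Perron--Frobenius images directly from the defining duality $\int \widehat{\widetilde T}^n \phi \cdot \psi\, d\widetilde\mu = \int \phi\cdot \psi\circ\widetilde T^n\, d\widetilde\mu$, using the explicit description of $(\widetilde Y)_n$ as the disjoint union $\bigcup_{k\ge 1}(\Omega_{n,k}^-\times I_{k+1}^-)\cup\bigcup_{k\ge 1}(\Omega_{n,k}^+\times I_{k+1}^+)$ together with the fact that, off $Y$, the random dynamics is exactly the simple random walk: $f_0=f_1^{-1}$ on each $I_m^\pm$ with $m\ge 2$, so that $f_\omega^{(j)}$ acts on $I_{k+1}^-$ (for $\omega\in\Omega_{n,k}^-\!\mid_n$) by shifting the index according to the random walk $W_j^{-(k+1)}$, and $\widetilde T^n$ maps $\Omega_{n,k}^-\times I_{k+1}^-$ bijectively and affinely onto $\theta^n(\Omega_{n,k}^-\mid_n\times\Omega)\times I_1^-$. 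First I would fix $n$ and a bounded measurable test function $\psi$ on $[0,1]^\sim$, and expand $\int_{(\widetilde Y)_n\cap\widetilde A^-}\psi\circ\widetilde T^n\, d\widetilde\mu$ as a sum over $k\ge 1$ of integrals over $\Omega_{n,k}^-\times I_{k+1}^-$ (the intersection with $\widetilde A^-$ just removes the $k=0$ term, i.e.\ forces the excursion to actually leave $I_1^-$, but since $(\widetilde Y)_n$ with $n\ge 1$ already lies in $\widetilde Y^c$ this is automatic on the negative side; on the positive side $\widetilde A^-$ kills the $I_{k+1}^+$ pieces entirely).

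The key computation is the change of variables on each piece. On $\Omega_{n,k}^-\times I_{k+1}^-$ the map $\widetilde T^n$ sends $(\omega,x)$ to $(\theta^n\omega, f_\omega^{(n)}(x))$ with $f_\omega^{(n)}$ an affine bijection $I_{k+1}^-\to I_1^-$ of slope $2^k$ (it is a composition of $n$ maps with slopes $2^{\pm1}$ whose net exponent is $k$, since $\omega\in\Omega_{n,k}^-$ means the random walk ends at index $1$, i.e.\ has risen by $k$). By construction (\ref{inv2-2}), $\mu$ restricted to $I_{k+1}^-$ is $2\,(f_0)_*^{k}\nu_-$, i.e.\ $f_\omega^{(n)}$ pushes $\mu|_{I_{k+1}^-}$ forward to $2\nu_-/2^{?}$ — more precisely, tracking the factors of $2$ in (\ref{inv2-2}) shows $(f_\omega^{(n)})_*(\mu|_{I_{k+1}^-}) = \mu|_{I_1^-} = \nu_-$, up to the Jacobian bookkeeping, because the density of $\mu$ on $I_{k+1}^-$ is exactly $2^{k}$ times (the pulled-back density on $I_1^-$); the $2^{k}$ is cancelled by the Jacobian $2^{-k}$ of $f_\omega^{(n)}$. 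Hence each $k$-term contributes $\mathbb P(\Omega_{n,k}^-)\int_{I_1^-}\psi(\theta^n\omega,y)\,d\nu_-(y)$ — using that $\Omega_{n,k}^-$ depends only on the first $n$ coordinates and $\theta^n$ pushes $\mathbb P|_{\Omega_{n,k}^-\mid_n\times\Omega}$ forward to $\mathbb P(\Omega_{n,k}^-)\,\mathbb P$. Summing over $k\ge 1$ and invoking (\ref{eq2-4}) gives $\int \psi\circ\widetilde T^n\, d\widetilde\mu$ over $(\widetilde Y)_n\cap\widetilde A^-$ equal to $c_n\int \psi\cdot 1_{I_1^-}\,d\widetilde\mu$, which by duality is exactly $\int \widehat{\widetilde T}^n 1_{(\widetilde Y)_n\cap\widetilde A^-}\cdot\psi\, d\widetilde\mu$; since $\psi$ was arbitrary, $\widehat{\widetilde T}^n 1_{(\widetilde Y)_n\cap\widetilde A^-} = c_n 1_{I_1^-}$. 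The second identity is the same computation without the restriction to $\widetilde A^-$: the positive pieces $\Omega_{n,k}^+\times I_{k+1}^+$ contribute symmetrically, landing on $I_1^+$, and by the left--right symmetry ($\mathbb P(\Omega_{n,k}^+)=\mathbb P(\Omega_{n,k}^-)$, and $\mu|_{I_1^+}=\nu_+$, $\mu|_{I_{k+1}^+}=2(f_1)_*^k\nu_+$) they contribute the same constant $c_n$, giving $\widehat{\widetilde T}^n 1_{(\widetilde Y)_n} = c_n(1_{I_1^-}+1_{I_1^+})$.

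The main obstacle I anticipate is purely bookkeeping rather than conceptual: keeping the powers of $2$ straight between the affine Jacobian of $f_\omega^{(n)}$ on $I_{k+1}^\pm$ and the $2(f_0)_*^{k}$ (resp.\ $2(f_1)_*^{k}$) normalization in the definition (\ref{inv2-2}) of $\mu$, and verifying that they cancel to leave precisely $\nu_\pm$ on $I_1^\pm$ with no residual factor — so that the constant is exactly $\mathbb P(\Omega_{n,k}^\pm)$ and not a rescaling of it. A secondary point to handle carefully is that $(\widetilde Y)_n$ must be partitioned correctly: one should confirm that for $\omega\in\Omega_{n,k}^-$ the full orbit segment $\widetilde T^j(\omega,x)$ for $0\le j<n$ indeed stays outside $\widetilde Y$ (this is the ``$<k$ for $l<n$'' condition in the definition of $\Omega_{n,k}^-$, i.e.\ the random walk does not hit $1$ before time $n$), which is exactly what makes $f_\omega^{(n)}$ the claimed affine bijection onto $I_1^-$; this is where Lemma \ref{lem-1} and the random-walk identification from its proof are used. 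Once these are in place the identity (\ref{eq2-4}) finishes the identification of $c_n$.
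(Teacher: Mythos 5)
Your plan follows the paper's proof essentially verbatim: test against product functions $\psi_1(\omega)\psi_2(x)$ in the duality, decompose $(\widetilde Y)_n$ into the pieces $\Omega_{n,k}^\pm\times I_{k+1}^\pm$, identify $f_\omega^{(n)}=f_1^k$ (resp.\ $f_0^k$) on the corresponding piece, push $\mu$ forward, and sum using \eqref{eq2-4}. One bookkeeping slip worth fixing: from \eqref{inv2-2} you have $\mu|_{I_1^-}=2\nu_-$, not $\nu_-$, so the correct identity is $(f_1^k)_*(\mu|_{I_{k+1}^-})=\mu|_{I_1^-}$ with Radon--Nikodym derivative $1$; your intermediate line ``$=\mu|_{I_1^-}=\nu_-$'' is off by the factor $2$, though your stated final identity $\int\psi\circ\widetilde T^n\,d\widetilde\mu=c_n\int\psi\,1_{I_1^-}\,d\widetilde\mu$ is the right one.
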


\begin{proof}
We prove this lemma in showing
\begin{align*}
\int_{\widetilde{I}} \psi(\omega, x) \mathcal{L}_{\widetilde{T}}^{n} 1_{(\widetilde{Y})_{n} \cap A^\pm}(\omega, x) d \widetilde{\mu}=\int_{\widetilde{I}} \psi(\omega, x) c_{n} 1_{I_1^{\pm}}(x) d \widetilde{\mu}
\end{align*}
for $\psi\in L^\infty(\widetilde{I},\widetilde{\mu})$ of the form $\psi(\omega,x)=\psi_1(\omega)\psi_2(x)$.
Write
\begin{align*}
&\int_{\widetilde{I}} \psi(\omega, x) \mathcal{L}_{\widetilde{T}}^{n} 1_{(\widetilde{Y})_{n} \cap A^-}(\omega, x) d \widetilde{\mu} \\ 
=& \int_{\widetilde{I}} \psi_{1}\left(\theta^{n} \omega\right) \psi_{2}\left(f_{\omega_{n}} \circ f_{\omega_{n-1}} \circ \cdots \circ f_{\omega_{1}} x\right) 1_{(\widetilde{Y})_{n}}(\omega, x) \sum_{k = 1}^{\infty} 1_{I_{k+1}^-}(x) d \widetilde{\mu} \\
=&\sum_{k=1}^{\infty} \int_{\Omega}\psi_1\left(\theta^n\omega\right){\sum_{(\omega_1,\dots,\omega_n)\in\Omega_{k,n}^-\mid_n}}1_{[\omega_1\dots\omega_n]}(\omega)d\mathbb{P}(\omega) \int_I\psi_2\left(f_1^kx\right) 1_{I_{k+1}^-}(x)d\mu(x)
\end{align*}
since $f_{\omega}^{(n)}=f_1^k$ for $(\omega_1,\dots,\omega_n)\in\Omega_{k,n}^-\mid_n$.
For $\mathcal{L}_{\theta}$ the Perron--Frobenius operator of $\theta$ with respect to $\mathbb{P}$, we have $\mathcal{L}_{\theta}^n 1_{[\omega_1,\dots,\omega_n]}=2^{-n}\cdot1_{\Omega}$ for each $(\omega_1,\dots,\omega_n)\in\{0,1\}^n$.
We also have $I_{k+1}^-=f_1^{-k}I_1^-$ and
\begin{align*}
\frac{d\left((f_1^k)_*\mu\right)}{d\mu}\bigg\vert_{I_1^-}=
\frac{d\left(2\nu_-\circ f_0^{-k}\circ f_1^{-k}\right)}{d\mu}\bigg\vert_{I_1^-}=
\frac{d(2\nu_-)}{d\mu}\bigg\vert_{I_1^-}=
\frac{d\mu}{d\mu}\bigg\vert_{I_1^-}
=1
\end{align*}
by the construction of $\mu$ and the fact that $f_0^{-k}\circ f_1^{-k}=\mathrm{id}$ on $I_1^-$.
Hence, by the equation \eqref{eq2-4}, it holds that
\begin{align*}
\int_{\widetilde{I}} \psi(\omega, x) \mathcal{L}_{\widetilde{T}}^{n} 1_{(\widetilde{Y})_{n} \cap A^-}(\omega, x) d \widetilde{\mu}
=&\sum_{k=1}^{\infty} \mathbb{P}\left(\varphi_{-1}^{-(k+1)}=n\right) \int_{\Omega}\psi_1d\mathbb{P} \int_I\psi_21_{I_1^-}d\mu \\
=& \int_{\widetilde{I}} \psi(\omega,x) \sum_{k=1}^{\infty} 1_{I_1^-}(x) \mathbb{P}\left(\varphi_{-1}^{-(k+1)}=n\right) d \widetilde{\mu}.
\end{align*}
For the right half part, the calculation of $\mathcal{L}_{\widetilde{T}}^{n} 1_{(\widetilde{Y})_{n} \cap A^+}$ is similar and is omitted.
\end{proof}

From the standard argument of simple symmetric random walks on $\mathbb{Z}$ (cf.~\cite{HY2021}), for $0<s<1$ we have
\begin{align*}
\sum_{n=1}^{\infty}s^n \mathbb{P}\left(\varphi_{-1}^{-(k+1)}=n\right) =\left(\frac{1-\sqrt{1-s^{2}}}{s}\right)^{k}
\end{align*}
for each $k\in\mathbb{N}$.
Thus, we have
\begin{align*}
\sum_{n=1}^{\infty} c_{n} s^{n} &=\sum_{n=1}^{\infty} \sum_{k=1}^{\infty} s^{n}\mathbb{P}\left(\varphi_{-1}^{-(k+1)}=n\right) \\
&=\sum_{k=1}^{\infty}\left(\frac{1-\sqrt{1-s^{2}}}{s}\right)^{k} \\
&= \frac{\left(1-\sqrt{1-s^{2}}\right)}{\sqrt{1-s^{2}}-1+s} \\
&\sim \frac{1}{\sqrt{2(1-s)}} \text { as } s \uparrow 1.
\end{align*}
Here, $a_N\sim b_N$ denotes $\frac{a_N}{b_N}\to 1$ as $N\to\infty$. 
Then by Karamata's Tauberian Theorem (see Proposition 4.2 in \cite{TZ2006}) we conclude
\begin{align*}
\sum_{n=1}^{N} c_{n} \sim \frac{1}{\sqrt{2}\Gamma\left(\frac{3}{2}\right)}  N^{\frac{1}{2}}= \sqrt{\frac{2}{\pi}} N^{\frac{1}{2}}.
\end{align*}
From the above computation together with Lemma \ref{lem2-6}, we can calculate asymptotics of the wandering rate:
\begin{align*}
w_{N}(Y^\sim)
&=\int_{Y^\sim} \sum_{n=0}^{N-1} \mathcal{L}_{\widetilde{T}}^{n} 1_{(\widetilde{Y})_{n}} d \widetilde{\mu}\\ 
&=\mu(Y) + \left(\mu\left(I_1^-\right)+\mu\left(I_1^+\right)\right)\sum_{n=1}^{N-1} c_{n} \\
&\sim \left(\mu\left(I_1^-\right)+\mu\left(I_1^+\right)\right) \sqrt{\frac{2}{\pi}} N^{\frac{1}{2}}
\end{align*}
and hence the condition (C2) is valid with the index $\frac{1}{2}$.
We also have
\begin{align*}
w_{N}\left(Y^\sim, A^-\right)
&=\widetilde{\mu}\left(Y^\sim \cap \widetilde{T}^{-1} A^-\right)+\sum_{n=1}^{N-1} \int_{\widetilde{I}}\mathcal{L}_{\widetilde{T}}^{n} 1_{\tilde{Y}_{n} \cap A^-}d \widetilde{\mu} \\
&=\frac{1}{2}\mu\left(I_1^-\right)+\mu\left(I_1^-\right) \sum_{n=0}^{N-1} c_{n}.
\end{align*}
Therefore, it also follow from Lemma \ref{lem2-6} that
\begin{align*}
\lim_{N\to\infty}\frac{1}{w_{N}(Y^\sim)} \sum_{n=0}^{N-1} \mathcal{L}_{\widetilde{T}}^{n} 1_{(\widetilde{Y})_{n}}
&= \lim_{N\to\infty}\frac{\sum_{n=0}^{N-1}c_n1_{I_1^-\cup I_1^+}}{w_N(Y^\sim)}\\
&= \frac{1}{\mu\left(I_1^-\right)+\mu\left(I_1^+\right)}\cdot 1_{I_1^-\cup I_1^+}
\end{align*}
uniformly on $Y^\sim$
and
\begin{align*}
\lim_{N\to\infty}\frac{1}{w_{N}\left(Y^\sim, A^-\right)} \sum_{n=0}^{N-1} \mathcal{L}_{\widetilde{T}}^{n} 1_{(\widetilde{Y})_{n} \cap A^-}
&= \frac{1}{\mu(I_1^-)}\cdot 1_{I_1^-}
\end{align*}
uniformly on $Y^\sim$
and we have checked the conditions (C1) and (C3).

Finally, for the condition (C4), we have
\begin{align*}
\frac{w_{N}\left(Y^\sim, A^-\right)}{w_{N}(Y^\sim)}
&=\frac{ \mu\left(I_1^-\right)\sum_{n=0}^{N-1} c_{n}+\frac{1}{2}\mu\left(I_1^-\right)}{\left(\mu\left(I_1^-\right)+\mu\left(I_1^+\right)\right) \sum_{n=0}^{N-1} c_{n}}\\
&\to\frac{\mu\left(I_1^-\right)}{\mu\left(I_1^-\right)+\mu\left(I_1^+\right)}=\frac{\nu\left(I_1^-\right)}{\nu\left(I_1^-\right)+\nu\left(I_1^+\right)}
\end{align*}
as $N$ tends to infinity.

Now we can apply Theorem \ref{thm:t} to the specific 
%\marginpar{\tiny YN: なぜここから結論がimmediateか一言書く}
%The conclusion of Theorem \ref{thm:main} immediately follows from the conclusion of  Theorem \ref{thm:t} for such
 $\tau, X, m, A, A^\pm$  given in \eqref{eq:0401ea}. 
For any random variable $\Theta$ with values in $Y$ which is independent of $\{ T_n\} _{n=1}^\infty$ and whose  distribution  $m_\Theta$ is absolutely continuous with respect to $\nu$ (so that $m_\Theta$ is absolutely continuous with respect to $\mu$ by construction of $\mu$), the probability measure $\mathbb P\times m_\Theta$ is absolutely continuous with respect to $\widetilde{\mu}$.
Hence, the equation \eqref{eq:0401eb} and the strong distributional convergence  in Theorem \ref{thm:t} with $\alpha = \frac{1}{2}$ and $B=[\frac{1}{2}, 1]^\sim$ complete the proof of Theorem \ref{thm:main}.

\begin{remark}\label{rem:211}
In order to apply Thaler--Zweim{\"u}ller's Darling--Kac law (\cite[Theorem 3.1]{TZ2006}), one needs to compute
\[
\widehat a_N:= \frac{4}{\pi}\cdot\frac{N}{w_N(Y^\sim)}
\]
for which
\[
\frac{1}{\widehat a_N} \sum_{n=0}^{N-1}1_{\{T^{(n)}(\Theta)\in E\}}
\xrightarrow[N\to\infty]{\mathrm{d}}
 \sqrt{\frac{\pi}{2}}  \mu (E)  \vert \mathcal N\vert
\]
holds for each Borel set $E$.
On the other hand, it holds that
\begin{align*}
\widehat a_N&\sim
\frac{4}{\pi}\sqrt{\frac{\pi}{2}}\cdot\frac{1}{\sqrt{N}}\cdot\frac{N}{\mu(I_1^-)+\mu(I_1^+)}
=2\sqrt{\frac{2}{\pi}}\frac{\sqrt{N}}{\mu(I_1^-)+\mu(I_1^+)}
\quad\text{as }N\to\infty .
\end{align*}
Hence, we immediately get the claim in Remark \ref{rem:DK}. 
\end{remark}

\section*{Acknowledgments}
We are sincerely grateful to Toru Sera for many fruitful discussions and comments, which substantially helped improving the earlier versions of the present paper. 
Yushi Nakano was  supported by JSPS KAKENHI Grant Numbers 	18K03376, 19K14575, 19K21834, 21K03332. 
Fumihiko Nakamura was  supported by JSPS KAKENHI Grant Number 	  19K21834.  
Hisayoshi Toyokawa was  supported by JSPS KAKENHI Grant Numbers 19K21834, 21K20330. 
Kouji Yano was  supported by JSPS KAKENHI Grant Numbers  JP19H01791, 19K21834 and    by JSPS Open Partnership Joint Research Projects grant Grant Number JPJSBP120209921.
This research was supported by RIMS.

  %%%%% References
%\begin{thebibliography}{99}

%  \bibitem{}
  
%\end{thebibliography}

\end{document}